\documentclass[oribibl,a4paper,envcountsame]{llncs}

\newif\ifcomments
\commentsfalse  

\newif\iffinal
\finalfalse      

\title{
    Improved torsion-point attacks on SIDH variants
}

\author{
    Victoria de Quehen\inst{1}
    \and
    P\'{e}ter Kutas\inst{2}
    \and
    Chris Leonardi\inst{1}
    \and
    Chloe Martindale\inst{3}
    \and \\
    Lorenz Panny\inst{4}
    \and
    Christophe Petit\inst{2,5}
    \and
    Katherine E. Stange\inst{6}
}

\institute{
    ISARA Corporation, Waterloo, Canada
    \\ 
        \email{vicdequehen@gmail.com}
        \\
        \email{chris.leonardi@isara.com}
    \and
    {School of Computer Science, University of Birmingham, UK}
        \\
        \email{P.Kutas@bham.ac.uk}
    \and
    Department of Computer Science,
    University of Bristol, UK
        \\
        \email{chloe.martindale@bristol.ac.uk}
    \and
    Institute of Information Science, Academia Sinica, Taipei, Taiwan
        \\
        \email{lorenz@yx7.cc}
    \and
        Laboratoire d'Informatique, \\
    Universit\'e libre de Bruxelles, Belgium
        \\
        \email{christophe.f.petit@gmail.com}
    \and
    {Department of Mathematics,
    University of Colorado Boulder, Colorado, USA}
        \\
        \email{kstange@math.colorado.edu}
}


\newcommand\None{A}
\newcommand\Ntwo{B}


\newcommand\Ostar{O^{\hspace{-.1ex}{\ast}\hspace{-.2ex}}}

\pagestyle{plain} 

\usepackage[dvipsnames]{xcolor}
\definecolor{linkcolor}{rgb}{0.65,0,0}
\definecolor{citecolor}{rgb}{0,0.65,0}
\definecolor{urlcolor}{rgb}{0,0,0.65}
\usepackage[
        pdfauthor={Péter Kutas, Chloe Martindale, Lorenz Panny, Christophe Petit, Victoria de Quehen, Kate E. Stange, Chris Leonardi},
        pdftitle={Improved torsion-point attacks on SIDH variants},
    colorlinks=true, linkcolor=linkcolor, urlcolor=urlcolor, citecolor=citecolor
]{hyperref}
\usepackage[hyphenbreaks]{breakurl} 

\definecolor{myyellow}{rgb}{1.0, 0.75, 0.0}
\definecolor{mygreen}{rgb}{0.35, 0.71, 0.0}

\usepackage[utf8]{inputenc}
\usepackage[T1]{fontenc}
\usepackage{amsmath,mathtools}
\usepackage{amssymb}
\usepackage{mathrsfs}
\usepackage{hyperref}
\usepackage[style=english]{csquotes}
\usepackage{graphicx}
\usepackage{tikz-cd}
\usepackage{enumerate}
\usepackage{lipsum}
\usepackage[british]{babel}
\addto{\captionsbritish}{}
\usepackage{tipa}
\usepackage{mathtools}
\usepackage{amssymb}
\usepackage{xcolor}
\usepackage{nth}
\usepackage{tikz}
\usepackage{tikz-cd}
\usepackage{comment}
\usepackage[open,openlevel=1]{bookmark}
\usepackage{multirow}
\usepackage{appendix}
\usepackage{listings}
\usepackage{setspace}
\usepackage{pifont}
\usepackage{bm}
\usepackage{adjustbox}

\usepackage{tkz-euclide}

\usepackage{booktabs}
\usepackage{multirow}

\usepackage{enumitem}
\setitemize  {topsep=1ex,itemsep=.5ex}
\setenumerate{topsep=1ex,itemsep=.5ex}

\usepackage[ruled,vlined,linesnumbered]{algorithm2e}
\usepackage{algpseudocode}
\SetEndCharOfAlgoLine{}
\DecMargin{2.2mm}
\SetKwInput{Input}{Input}\SetKwInput{Output}{Output}
\SetKwFor{While}{While}{do}{}
\SetKwFor{ForEach}{For each}{do}{}
\SetKwIF{If}{ElseIf}{Else}{If}{then}{else if}{else}{endif}
\SetKw{Return}{Return}
\SetKw{KwRet}{return}
\SetKw{Recurse}{Recurse}
\SetKwFor{For}{For}{do}{}
\SetKw{Break}{break}

\newcommand\mybullet{\raisebox{.22ex}{\smaller\smaller\,\textbullet\hspace{.04em}}}

\newlength\InputNewLineIndent
\newcommand\InputNewLine{%
    \settowidth\InputNewLineIndent{\KwIn{}}%
    \\%
    \mbox{}%
    \hspace*{\InputNewLineIndent}%
    \relax%
}

\usepackage{geometry}

\newcommand\ul[1]{\underline{\smash{#1}}}

\usepackage{relsize}

\hyphenation{ASIA-CRYPT Asia-PKC cryp-to-gra-phy non-zero al-go-rithm}  
\spnewtheorem{heuristic}{Heuristic}{\normalfont\bfseries}{\itshape}
\undef\problem\spnewtheorem{problem}{Problem}{\normalfont\bfseries}{\normalfont}

\usepackage{xspace}
\newcommand{\jinvariant}{\(j\)\babelhyphen{nobreak}invariant\xspace}
\newcommand\Z{\mathbb Z}
\newcommand\Q{\mathbb Q}
\newcommand\F{\mathbb F}
\newcommand\BB{\mathcal B}
\newcommand\OO{\mathcal O}
\DeclareMathOperator{\End}{End}
\DeclareMathOperator{\Hom}{Hom}
\DeclareMathOperator{\real}{Re}
\DeclareMathOperator{\ima}{Im}
\DeclareMathOperator{\nor}{N}

\DeclareMathOperator{\tr}{tr}
\DeclareMathOperator{\SLE}{SLE}

\renewcommand\phi\varphi    

\renewcommand\i{\mathbf{i}}
\renewcommand\j{\mathbf{j}}
\renewcommand\ij{\mathbf{ij}}
\newcommand\ji{\mathbf{ji}}

\newcommand\cl{\mathrm{cl}}
\usepackage{trimclip}
\newcommand\Ell{%
    \mathcal{E}%
    \hspace{-.35ex}%
    \clipbox{.25mm 0mm 0mm 0mm}{$\ell$}%
    \hspace{-.18ex}%
    \raisebox{-.05mm}{\rotatebox{2}{\clipbox{.15mm 0mm 0mm 0mm}{$\ell$}}}%
    }

\renewcommand\hat\widehat 
\newcommand\dual\widehat
\newcommand\bigO{O}
\newcommand\polylog{\operatorname{polylog}}
\newcommand{\bd}{[d]}

\newcommand\comp{\mathscr C}

\let\oldlabelitemii\labelitemii
\let\labelitemii\labelitemi
\let\labelitemi\oldlabelitemii

\ifcomments
\newcommand{\CV}[1]{\textcolor{violet}{{\sf (Vic:} {\sl{#1})}}}
\newcommand{\CL}[1]{\textcolor{CornflowerBlue}{{\sf (Chris:} {\sl{#1})}}}
\newcommand{\CP}[1]{\textcolor{blue}{{\sf (Christophe:} {\sl{#1})}}}
\newcommand{\PK}[1]{\textcolor{ForestGreen}{{\sf (Peter:} {\sl{#1})}}}
\newcommand{\KS}[1]{\textcolor{Bittersweet}{{\sf (Kate:} {\sl{#1})}}}
\newcommand{\CM}[1]{\textcolor{orange}{{\sf (Chloe:} {\sl{#1})}}}
\newcommand{\LP}[1]{\textcolor{BlueGreen}{{\sf (Lorenz:} {\sl{#1})}}}
\else
\newcommand{\CV}[1]{}
\newcommand{\CL}[1]{}
\newcommand{\CP}[1]{}
\newcommand{\PK}[1]{}
\newcommand{\KS}[1]{}
\newcommand{\CM}[1]{}
\newcommand{\LP}[1]{}
\fi

\newcommand{\erase}[1]{}

\newcommand\frst{%
        &\mbox{}%
        \hspace{.9ex}
    }
\newcommand\cont{%
        \hspace{.3ex}%
        \textcolor{gray}{...}%
        \\ &\mbox{}%
        \hspace{-1.5ex}%
        \textcolor{gray}{...}%
        \hspace{.4ex}
    }
\makeatletter
\def\namedlabel#1#2{\begingroup
   \def\@currentlabel{#2}%
   \label{#1}\endgroup
}
\makeatother

\begin{document}

\maketitle

\begingroup
\makeatletter
\def\@thefnmark{$\ast$}\relax
\@footnotetext{\relax
Author list in alphabetical order; see
\url{https://www.ams.org/profession/leaders/culture/CultureStatement04.pdf}.
Lorenz Panny was a PhD student at Technische Universiteit Eindhoven
while this research was conducted.
P\'eter Kutas and Christophe Petit's work was supported by  EPSRC grant EP/S01361X/1.  Katherine E. Stange was supported by NSF-CAREER CNS-1652238.
  This work was supported in part
  by the Commission of the
  European Communities through the Horizon 2020 program under project number
  643161 (ECRYPT-NET)
  and in part by NWO project 651.002.004 (CHIST-ERA USEIT).
\def\ymdtoday{\leavevmode\hbox{\the\year-\twodigits\month-\twodigits\day}}\def\twodigits#1{\ifnum#1<10 0\fi\the#1}%
\iffinal
Date of this document: 2021-06-25.
``\textcopyright IACR 2021. This article is the final version submitted by the author(s) to the IACR and to Springer-Verlag on June 25, 2021. The version published by Springer-Verlag is available at <DOI>.''
\else
Date of this document: \ymdtoday.
\fi
}
\endgroup


\begin{abstract}

SIDH is a post-quantum key exchange algorithm based on the presumed difficulty of finding isogenies between supersingular elliptic curves.
However, SIDH and related cryptosystems also reveal additional information: the restriction of a secret isogeny to a subgroup of the curve (torsion-point information). Petit \cite{Petit2017} was the first to demonstrate that torsion-point information could noticeably lower the difficulty of finding secret isogenies. 
In particular, Petit showed that ``overstretched'' parameterizations of SIDH could be broken in polynomial time. However, this did not impact the security of any cryptosystems proposed in the literature.
The contribution of this paper is twofold: First, we strengthen the techniques of \cite{Petit2017} by exploiting additional information coming from a dual and a Frobenius isogeny. This extends the impact of torsion-point attacks considerably. In particular, our techniques yield a classical attack that completely breaks the $n$-party group key exchange of \cite{DBLP:journals/iacr/AzarderakhshJJS19}, first introduced as GSIDH in \cite{DBLP:conf/isita/FurukawaKT18}, for 6 parties or more, and a quantum attack for 3 parties or more that improves on the best known asymptotic complexity. We also provide a Magma implementation of our attack for 6 parties. We give the full range of parameters for which our attacks apply.
Second, we construct SIDH variants designed to be weak against our attacks; this includes backdoor choices of starting curve, as well as backdoor choices of base-field prime.
We stress that our results do not degrade the security of, or reveal any weakness in, the NIST submission SIKE \cite{azarderakhsh2019supersingular}.

\end{abstract}

\setcounter{footnote}{0}

\section{Introduction}

\noindent
With the advent of quantum computers, commonly deployed cryptosystems based on the integer-factorization or discrete-logarithm problems will need to be replaced by new post-quantum cryptosystems that rely on different assumptions.
Isogeny-based cryptography is a relatively new field within post-quantum cryptography.
An \emph{isogeny} is a non-zero rational map between elliptic curves that also preserves the group structure, and isogeny-based cryptography is based on the conjectured hardness of finding isogenies between elliptic curves over finite fields.

Isogeny-based cryptography stands out amongst post-quantum primitives 
due to the fact that isogeny-based key-exchange achieves the smallest key sizes of all candidates.
Isogeny-based schemes also appear to be fairly flexible;
for example, a relatively efficient 
post-quantum non-interactive key agreement protocol called CSIDH~\cite{10.1007/978-3-030-03332-3_15} is built on isogeny assumptions.

The \emph{Supersingular Isogeny Diffie--Hellman} protocol, or \emph{SIDH}, was the first practical isogeny-based key-exchange protocol, 
proposed in 2011 by Jao and De~Feo~\cite{jao2011towards}.
The security of SIDH relies on the hardness of solving (a special case of) the following problem:%
    \footnote{%
        See Section~\ref{sec:isogrep} for how the objects discussed are represented computationally.
    }

\begin{problem}[Supersingular Isogeny with Torsion (SSI-T)]\namedlabel{torsion}{SSI-T}\label{problemone}
    For a prime $p$ and
	smooth coprime integers $\None$ and $\Ntwo$, 
	given two supersingular elliptic curves $E_0/\F_{p^2}$ and $E/\F_{p^2}$
    connected by an unknown degree-$\None$ isogeny $\phi\colon E_0 \to E$,
	and given the restriction
    of~$\phi$
    to the $\Ntwo$-torsion of~$E_0$,
    recover an%
    \footnote{%
        \label{fn:isog_uniq}%
        These constraints do not necessarily uniquely determine $\phi$, 
        but any efficiently computable isogeny from $E_0$ to $E$ 
        is usually enough to recover the SIDH secret~\cite{galbraith2016security,takoisogeny}.
        Moreover, $\phi$ is unique whenever $B^2>4A$~\cite[\S\,4]{not-break-sidh}.
    }
    isogeny~$\phi$
    matching these constraints.
\end{problem}

\noindent
\ref{torsion} is a generalization of the \enquote{Computational Supersingular Isogeny problem}, or CSSI for short, defined in \cite{jao2011towards}.
Although the CSSI problem that appears in the literature also includes torsion information, we use the name \ref{torsion} to stress the importance of the additional torsion information. Additionally, we consider more flexibility in the parameters than CSSI to challenge the implicit assumption that even with torsion information the hardness of the protocol always scales with the degree of the isogenies and the characteristic $p$ of the field.

The best known way 
to break SIDH by treating it as a pure isogeny problem is a 
claw-finding
approach on the isogeny graph having classical complexity $\bigO(\hspace{-.2em}\sqrt\None\cdot\polylog(p))$
and no known quantum speedups viable in reality~\cite{DBLP:conf/crypto/JaquesS19}.%
\footnote{%
    Note that the naïve meet-in-the-middle approach has prohibitively
    large memory requirements. Collision finding
    à la van~Oorschot--Wiener
    thus
    performs better in practice, although its time complexity is worse in theory%
    ~\cite{DBLP:journals/iacr/AdjCCMR18}.
}
However, it is clear that \ref{torsion} provides the attacker with more information than
the \enquote{pure} supersingular isogeny problem, where the goal is to find an isogeny
between two given supersingular elliptic curves without any further hints or restrictions.

The first indication that additional torsion-point information
could be exploited to attack a supersingular isogeny-based cryptosystem 
was an active key-reuse attack against SIDH 
published in 2016~\cite{galbraith2016security} by Galbraith, Petit, Shani, and Ti.
In~\cite{galbraith2016security} the attacker sends key-exchange messages
with manipulated torsion points
and detects whether the key exchange succeeds.
This allows recovery of the secret key within $O({\log \None})$ queries.
To mitigate this attack, \cite{galbraith2016security}~proposes
using the Fujisaki--Okamoto transform,
which generically renders
a CPA-secure public-key encryption scheme CCA-secure,
and therefore thwarts those so-called \emph{reaction attacks}.
The resulting scheme 
\emph{Supersingular Isogeny Key Encapsulation},
or \emph{SIKE}~\cite{azarderakhsh2019supersingular} for short, is
the only isogeny-based submission to NIST's standardization project for post-quantum cryptography~\cite{nistpqc}, and is currently a Round 3 \enquote{Alternate Candidate}.

However,~\ref{torsion} can be easier than
finding isogenies in general.
Indeed, a line of work \cite{Petit2017,bottinellidark} revealed a 
separation between the hardness of the 
supersingular isogeny problem and \ref{torsion} for some parameterizations.
This is potentially concerning because
 several similar schemes have been proposed
that are based on the more general \ref{torsion}, and in particular, not clearly based on
the CSSI problem as stated in~\cite{jao2011towards} due to CSSI's restrictions on $A$ and $B$
\cite{DBLP:conf/asiacrypt/Costello20, sahusupersingular, DBLP:conf/isita/FurukawaKT18,DBLP:journals/iacr/AzarderakhshJJS19, boneh2020oblivious, de2019seta}. For example, for the security of the GSIDH $n$-party group key agreement \cite{DBLP:conf/isita/FurukawaKT18,DBLP:journals/iacr/AzarderakhshJJS19}, \ref{torsion} must hold for $B \approx A^{n-1}$.

A particular choice made in SIKE is to fix
the \enquote{starting curve} $E_0$ to be
a curve defined over $\F_p$ that has small-degree non-scalar endomorphisms; 
these are very rare properties within the set of all supersingular curves defined over~$\F_{p^2}$.
On its own, such a choice of starting curve does not seem to have any negative security implications for SIKE.
However, in addition to their active attack,~\cite{galbraith2016security} shows
that given an explicit description of \emph{both} curves' endomorphism rings,
it is (under reasonable heuristic assumptions)
possible to recover the secret isogeny in SIKE.
The argument in \cite{galbraith2016security} does not use torsion-point information, but only applies if the curves are sufficiently close; 
recently \cite{takoisogeny} showed that if torsion-point information is provided the two curves do not need to be close. 

The approach for solving \ref{torsion} introduced by Petit in
2017~\cite{Petit2017} exploits both torsion-point information 
and knowledge of the endomorphism ring of the special starting curve.
This attack is efficient for certain parameters,
for which the \enquote{pure} supersingular isogeny problem still appears
to be hard.
It uses the knowledge of the secret isogeny restricted to a large torsion subgroup to recover the isogeny itself,
giving a \emph{passive} heuristic polynomial-time attack on non-standard variants of SIDH
satisfying $\Ntwo>\None^4>p^4$.
However,
in practice, for all the SIDH-style schemes proposed in the literature so far,
both $\None$ and $\Ntwo$ are
taken to be divisors of $p^2-1$, allowing torsion points 
to be defined over small field extensions, which makes the resulting scheme more efficient.  
One of the contributions of this work is extending torsion-point attacks to have a stronger impact on parameterizations where A and B are divisors  of $p+1$ or $p^2-1$.

\subsection{Our contributions}

We improve upon and extend Petit's 2017 \emph{torsion-point attacks}~\cite{Petit2017} in several ways. 
Our technical results have the following cryptographic implications:
\begin{itemize}
    \item  We give an attack on $n$-party group key agreement \cite{DBLP:conf/isita/FurukawaKT18,DBLP:journals/iacr/AzarderakhshJJS19}, see Section~\ref{ss:frob_nonpoly} and in particular Table~\ref{table:group-key-exchange}.
    This attack applies to the GSIDH protocol of \cite{DBLP:conf/isita/FurukawaKT18}, 
    not to the SIBD procotol of \cite{DBLP:conf/isita/FurukawaKT18}.
    Our attack yields, under Heuristic~\ref{heur:deathattack-kate2}:
    \begin{itemize}
        \item A polynomial-time break for $n\geq 6$.
        \item An improved classical attack for $n\geq 5$.
        \item An improved quantum attack for $n\geq 3$ (compared to the asymptotic complexity for quantum claw-finding computed in~\cite{DBLP:conf/crypto/JaquesS19}).
    \end{itemize}
    
    We provide a Magma~\cite{bosma1997magma} implementation of our attack on $6$-party group key agreement, see \url{https://github.com/torsion-attacks-SIDH/6party}.
    \item We give an attack on B-SIDH~\cite{DBLP:conf/asiacrypt/Costello20} that, under Heuristic~\ref{heur:deathattack-kate}, 
    is asymptotically better than quantum claw-finding (with respect to~\cite{DBLP:conf/crypto/JaquesS19}), 
    although it does not weaken the security claims of~\cite{DBLP:conf/asiacrypt/Costello20} (see Section~\ref{impact-BSIDH}).
    \item We show that setting up a B-SIDH group key agreement in the natural way would yield a polynomial-time attack for 4 or more parties (see Section~\ref{sec:bsidhgroup}).
    \item More generally, we solve Problem~\ref{torsion} (under plausible explicit heuristics) in: 
    \begin{enumerate}
        \item Polynomial time when
        \begin{itemize}
            \item $j(E_0) = 1728$, $B > pA$, $p>A$, $A$ has (at most) $O(\log\log p)$ distinct prime factors, and $B$ is at most polynomial in $A$ (Proposition~\ref{prop:eq1} and Corollary~\ref{eqnofdeath}).
            \item $j(E_0) = 1728$, $B > \sqrt{p}A^2$, $p> A$, $A$ has (at most) $O(\log\log p)$ distinct prime factors, and $B$ is at most polynomial in $A$
            (Proposition~\ref{prop:eq2} and Corollary~\ref{eqnoflessdeath}).
            \item $E_0$ is a specially constructed \enquote{backdoor curve}, $B>A^2$, and $A$ has (at most)
            $O(\log\log p)$ distinct prime factors
            (Theorem~\ref{trapdoor} and Algorithm~\ref{alg:insecure}).
            \item $j(E_0) = 1728$ and $p$ is a specially constructed backdoor prime
            (Sections~\ref{ss:Insecure p} and~\ref{subsec:param}).
        \end{itemize}
        \item Superpolynomial time but asymptotically more efficient than meet-in-the-middle on a classical computer when
        \begin{itemize}
            \item $j(E_0) = 1728$, $B > \max\left\{ \sqrt{p}A^{\frac34}, A, p\right\}$, $A$ has (at most) $O(\log\log p)$ distinct prime factors, and $B$ is at most polynomial in $A$ (Corollary~\ref{cor:nonpolydual}).
            \item $j(E_0) = 1728$, $B > \sqrt{p}A$, $A$ has (at most) $O(\log\log p)$ distinct prime factors, and $B$ is at most polynomial in $A$ (Corollary~\ref{cor:nonpolyfrob}). 
            \item $E_0$ is a specially constructed \enquote{backdoor curve} and $A$ has (at most)
            $O(\log\log p)$ distinct prime factors (Proposition~\ref{extend-improve}).
        \end{itemize}
        \item Superpolynomial time but asymptotically more efficient than quantum claw-finding (with respect to~\cite{DBLP:conf/crypto/JaquesS19}) when
            $j(E_0) = 1728$, $B > \sqrt{p}$, $A$ has (at most) $O(\log\log p)$ distinct prime factors, and $B$ is at most polynomial in $A$ (Corollary~\ref{cor:nonpolyfrob}).
    \end{enumerate}
 \end{itemize}
 
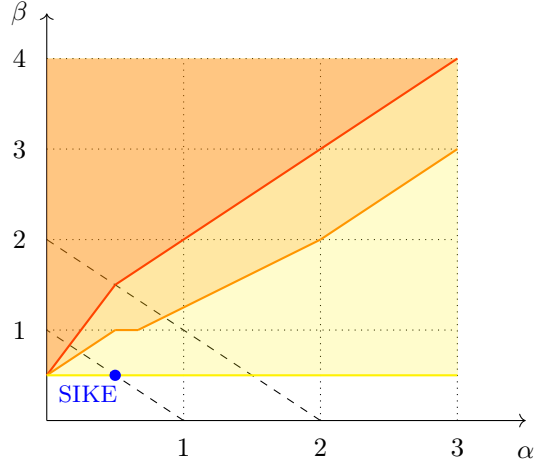
\begin{figure}
    \vspace{-4ex}
    \centering
    \begin{tikzpicture}[yscale=1.2,xscale=1.8]
    \path[->] (0,0) edge[black] (3.5,0) ;
    \path[->] (0,0) edge[black] (0,4.5) ;
    \draw[thin, dotted] (0,1) -- (3,1);
    \draw[thin, dotted] (0,2) -- (3,2);
    \draw[thin, dotted] (0,3) -- (3,3);
    \draw[thin, dotted] (0,4) -- (3,4);
    \draw[thin, dotted] (1,0) -- (1,4);
    \draw[thin, dotted] (2,0) -- (2,4);
    \draw[thin, dotted] (3,0) -- (3,4);
    \node at (1,-0.3) {1} ;
    \node at (2,-0.3) {2} ;
    \node at (3,-0.3) {3} ;
    \node at (3.5,-0.35) {$\alpha$} ;
    \node at (-0.2,1) {1} ;
    \node at (-0.2,2) {2} ;
    \node at (-0.2,3) {3} ;
    \node at (-0.2,4) {4} ;
    \node at (-0.2,4.5) {$\beta$} ;

    \draw[thin,dashed] (2,0) -- (0,2) ;
    \draw[thin,dashed] (1,0) -- (0,1);

    \draw[red,thick] (0,1/2) -- (1/2,3/2); 
    \draw[red,thick] (1/2,3/2) -- (3,4); 
    \draw[fill=red,draw=none,fill opacity=.2] (0,1/2)--(1/2,3/2)--(3,4)--(0,4)--cycle;

    \draw[orange,thick] (0,1/2) -- (1/2,1) ; 
    \draw[orange,thick] (1/2,1) -- (2/3,1) ; 
    \draw[orange,thick] (2/3,1) -- (2,2) ; 
    \draw[orange,thick] (2,2) -- (3,3) ; 
    \draw[fill=orange,draw=none,fill opacity=.2] (0,1/2)--(1/2,1)--(2/3,1)--(2,2)--(3,3)--(3,4)--(0,4)--cycle;
    %

    \draw[yellow,thick] (0,1/2) -- (3,1/2) ; 
    \draw[fill=yellow,draw=none,fill opacity=.2] (0,1/2)--(3,1/2)--(3,4)--(0,4)--cycle;

    \node[fill=blue,circle,inner sep=1.5pt] at (1/2,1/2) {};
    \node[blue] at (0.3,0.3) {\smaller SIKE};

    \end{tikzpicture}
    \vspace{-1ex}
    \caption{Performance of our 
    attacks for $j(E_0)=1728$.
    Here $A\approx p^{\alpha}$ and $B\approx p^{\beta}$.
    Parameters above the red, orange and yellow curves are parameters admitting a polynomial-time attack,
    an improvement over the best classical attacks,
    and an improvement over the best quantum attacks respectively.
    Parameters below the upper dashed line are those allowing
    $AB\mid(p^2-1)$ as in~\cite{DBLP:conf/asiacrypt/Costello20}.
    Parameters below the lower dashed line are those allowing $AB\mid(p+1)$
    as in~\cite{azarderakhsh2017supersingular,azarderakhsh2019supersingular}. The blue dot corresponds to SIKE parameters.
    \label{fig:attackperf}}
\end{figure}

\vspace{-5ex}
\noindent
These cryptographic implications are consequences of the following new mathematical results:
\begin{itemize}
    \item In Section~\ref{sec:prob}, we formalize the hardness assumption and reduction implicit in~\cite{Petit2017}. We call this hardness assumption the \textit{Shifted Lollipop Endomorphism} (SLE) Problem.
    \item In Section~\ref{sec:improvements}, we give two improved reductions to $\SLE$ (leading to our \emph{dual isogeny attack} and \emph{Frobenius isogeny attack}).
    \item In Section~\ref{sec:insecure}, we:
    \begin{itemize}
        \item Introduce \enquote{backdoor} curves, which, when used as $E_0$, allows us to solve \ref{torsion} in polynomial time if $B > A^2$. 
        \item Give a method to construct backdoor curves and study their frequency.
        \item Introduce \enquote{backdoor} primes, which, when used for $p$, allows us to solve \ref{torsion} in polynomial time.
    \end{itemize}
    \item In Section~\ref{sec:non-poly}, we show how to extend both the dual isogeny attack and the Frobenius isogeny attack to allow for superpolynomial attacks.
 \end{itemize}   
 
We emphasize that none of our attacks apply to the NIST candidate SIKE: for each attack described in this paper, at least one aspect of SIKE needs to be changed (e.g., the balance of the degrees of the secret isogenies, the starting curve, or the base-field prime). 

\CP{Comment on memory costs and improve theorems to say they are zero?}

\subsection{Comparison to earlier work}\label{sec:earlier-work}

In \cite{DBLP:journals/iacr/AzarderakhshJJS19}, the authors estimated that the attack from \cite{Petit2017}
would render their scheme insecure for 400 parties or more. In contrast, we give a complete break when there are at least 6 parties.

The cryptanalysis done by Bottinelli et al.~\cite{bottinellidark} also gave a reduction in the same vein as Petit's 2017 paper~\cite{Petit2017}.
Our work overlaps with theirs (only) in Corollary~\ref{eqnoflessdeath},
and the only similarity in techniques is in the use of \enquote{triangular decomposition}~\cite[\S\,5.1]{bottinellidark}, see the middle diagram in Figure~\ref{FigDiamond}.
Although their improvement is akin to the one given by our dual isogeny attack, they require additional
(shifted lollipop) endomorphisms\CV{we do use the dual isogeny, it is more that we need extra endomorphisms};
unfortunately, we have not found a way to combine the two methods. 
Moreover, our results go beyond~\cite{bottinellidark} in several ways: we additionally introduce the Frobenius isogeny attack (in particular giving rise to our attack on group key agreement).
We consider multiple trade-offs for both the dual and the Frobenius isogeny attacks by allowing for superpolynomial attacks, as well as considering other starting curves and base-field primes.

\subsection{Outline}

In Section~\ref{sec:Pre} we go over various preliminaries, including reviewing SIDH.
In Section~\ref{sec:prob} we define the relevant hard isogeny problems and give a technical preview;
we also outline the idea behind our attacks and how they give rise to reductions of the~\ref{torsion} Problem.
In Section~\ref{sec:improvements}
we prove our reductions and give two new algorithms to solve~\ref{torsion} in polynomial time for certain parameter sets.
In Section~\ref{sec:insecure} we introduce backdoor curves $E_0$ and backdoor primes $p$ for which we can solve~\ref{torsion} in polynomial time for certain parameter sets.
In Section~\ref{sec:non-poly} we extend the attacks of Sections~\ref{sec:improvements} and~\ref{sec:insecure} to superpolynomial attacks.
In Section~\ref{sec:impact} we give the impact of our attacks on cryptographic protocols in the literature.
In Section~\ref{sec:open} we pose an open question on constructing new reductions.

\subsubsection*{Acknowledgements.}

\noindent
Thanks to Daniel J.\ Bernstein for his insight into estimating sizes of solutions to Equation~\ref{the_eqnofdeath}, to John Voight for answering a question of ours concerning Subsection~\ref{sec:numweak}, and to Boris Fouotsa for identifying errors in Proposition~\ref{prop:nonpoly_unbalanced} and its proof.
We would also like to thank Filip Pawlega and the anonymous reviewers for their careful reading and helpful feedback.
\vspace{.7ex}

\section{Preliminaries}\label{sec:Pre}


\subsection{The Supersingular Isogeny Diffie--Hellman protocol family}\label{subsec:sidh}

We give a somewhat generalized high-level description of SIDH~\cite{jao2011towards}.
Recall that $E[N]$ denotes the $N$-torsion subgroup of an elliptic curve $E$
and $[m]$ denotes scalar multiplication by $m$.
The public parameters of the system are two smooth coprime numbers $\None$ and $\Ntwo$, a prime $p$ of the form $p=\None\Ntwo f-1$, where $f$ is a small cofactor, and a supersingular elliptic curve $E_0$
defined over $\F_{p^2}$ together with points $P_A,Q_A,P_B,Q_B\in E_0$ such that
$E_0[\None]=\langle P_A,Q_A\rangle$
and
$E_0[\Ntwo]=\langle P_B,Q_B\rangle$.

\vspace{.4ex}
\noindent
The protocol then proceeds as follows:
\newcommand\GGAA{G_{\hspace{-.1em}A}}
\newcommand\GGBB{G_{\hspace{-.1em}B}}
\begin{enumerate}
    \item Alice chooses a random cyclic subgroup of $E_0[\None]$ as $\GGAA=\langle P_A + [x_A] Q_A\rangle$ and Bob chooses a random cyclic subgroup of $E_0[\Ntwo]$ as $\GGBB=\langle P_B + [x_B] Q_B\rangle$.
    \item Alice computes the isogeny $\phi_A: E_0\rightarrow E_0/\langle \GGAA \rangle=:E_A$ and Bob computes the isogeny $\phi_B: E_0\rightarrow E_0/\langle \GGBB \rangle=:E_B.$
    \item Alice sends the curve $E_A$ and the two points $\phi_A(P_B),\phi_A(Q_B)$ to Bob.
        Similarly, Bob sends $\big(E_B,\phi_B(P_A),\phi_B(Q_A)\big)$ to Alice.
    \item Alice and Bob use the given torsion points to obtain the shared secret curve $E_0/\langle \GGAA,\GGBB \rangle$. To do so, Alice computes $\phi_B(\GGAA)=\phi_B(P_A) + [x_A] \phi_B(Q_A)$ and uses the fact that $E_0/\langle \GGAA,\GGBB \rangle \cong E_B/\langle \phi_B(\GGAA)\rangle$. Bob proceeds analogously.

\erase{
        \vspace{.4ex}

        {\smaller\smaller
        (Publishing the action of the
        secret isogeny on public points
        can be considered
        \emph{the}
        core idea behind SIDH:
        Alice needs $\phi_B(\GGAA)$
        to complete the key exchange,
        but
        Bob must keep $\phi_B$ secret
        and Alice must keep $\GGAA$ secret.
        Handing out the action of $\phi_B$
        on a
        publicly known
        group that \emph{contains}
        the secret
        $\GGAA$ is a clever workaround for this problem.)
        \par}
}
\end{enumerate}

The SIKE proposal~\cite{azarderakhsh2019supersingular}
suggests various choices of $(p,\None,\Ntwo)$ depending on the targeted security level:
All parameter sets use powers of two and three for~$\None$ and~$\Ntwo$,
respectively, with $\None \approx \Ntwo$ and $f=1$.
For example, the smallest parameter set suggested in~\cite{azarderakhsh2019supersingular}
uses $p = 2^{216} \cdot 3^{137}-1$.
Other constructions belonging to the SIDH \enquote{family tree} of protocols use different types of parameters~\cite{DBLP:conf/isita/FurukawaKT18,DBLP:conf/asiacrypt/Costello20,DBLP:journals/iacr/AzarderakhshJJS19,sahusupersingular}.

We may assume knowledge of $\End(E_0)$:
The only known way to construct supersingular elliptic curves is by
reduction of elliptic curves
with CM by a small discriminant
(which implies small-degree endomorphisms: see~\cite{love2019supersingular,rational-irrational}),
or by isogeny walks starting from such curves
(where knowledge of the path reveals the endomorphism ring,
thus requiring trusted setup).
A common choice when $p\equiv3\pmod4$ is $j(E_0)=1728$
or a small-degree isogeny neighbour of that curve~\cite{azarderakhsh2019supersingular}. 
Various variants of SIDH exist in the literature. 
We will call a variant an \textit{SIDH-like} protocol if its security
can be broken
by solving \ref{torsion} for some values of $A$ and $B$.

In \cite{DBLP:journals/iacr/AzarderakhshJJS19} the authors propose the following $n$-party key agreement, first introduced as GSIDH in~\cite{DBLP:conf/isita/FurukawaKT18}.\footnote{%
\cite{DBLP:conf/isita/FurukawaKT18} also proposes a different group key agreement, SIBD,
to which our attack does not apply.}
The idea is to use primes of the form $p=f\prod_{i=1}^n\ell_i^{e_i}-1$ where $\ell_i$ is the $i$-th prime number, 
the $i$-th party's secret isogeny has degree $\ell_i^{e_i}$, 
the $i$-th participant provides the images of a basis of the $\prod_{j=1}^n\ell_j^{e_j}/\ell_i^{e_i}$ torsion,
and $f$ is a small cofactor.
They choose the starting curve to be of \jinvariant 1728 and choose the $e_i$ in such a way that all the 
$\ell_i^{e_i}$ are of roughly the same size. 
This is an example of an SIDH-like protocol; 
for this protocol to be secure it is required that
\ref{torsion} be hard when 
$A=\ell_1^{e_1}$ and $B=f\prod_{i=2}^{n}\ell_i^{e_i}$.
However, we prove in Theorem~\ref{thm:polybreakgroupkey} that \ref{torsion} can be solved in polynomial time for 6 or more parties; also see Table~\ref{table:group-key-exchange} for the complexity of our attack for any number of parties.

Another example of a SIDH-like scheme is B-SIDH \cite{DBLP:conf/asiacrypt/Costello20}. 
In B-SIDH, the prime has the property that $p^2-1$ is smooth (as opposed to just $p+1$ being smooth)
and $A\approx B\approx p$. 
It would seem that choosing parameters this way one has to work over $\mathbb{F}_{p^4}$ 
but in fact the scheme simultaneously works with the curve and its quadratic twist 
(i.e., a curve which is not isomorphic to the original curve over $\mathbb{F}_{p^2}$ but has the same \jinvariant) 
and avoids the use of extension fields. 
The main advantage of B-SIDH is that the base-field primes used can be considerably smaller than the primes used in SIDH. 
We discuss the impact of our attacks of B-SIDH in Subsection~\ref{impact-BSIDH};
although we give an improvement on the quantum attack of~\cite{DBLP:conf/crypto/JaquesS19}
the parameter choices in~\cite{DBLP:conf/asiacrypt/Costello20} are not affected as they were chosen with a significant quantum security margin.
\LP{should mention \cite{takoisogeny} here?}

The general concept of using primes of this form extends beyond the actual B-SIDH scheme. 
As a final example of an SIDH-like scheme, consider the natural idea of using B-SIDH in a group key agreement context. 
The reason that this construction is a natural choice is that a large number of parties implies a large base-field prime, 
which is an issue both in terms of efficiency and key size. 
Using a B-SIDH prime could in theory enable the use of primes of half the size.
However, as we show in Corollary~\ref{cor:bsidhgroupkey}, such a scheme is especially susceptible to our attacks and is broken in polynomial time for 4 or more parties.

\subsection{Notation}

Throughout this paper,
we work with the field $\F_{p^2}$ for a prime $p$.
In our analysis we often want to omit factors polynomial in $\log p$;
 as such, from this point on we will abbreviate $\bigO(g\cdot\polylog(p))$ by $\Ostar(g)$.%
\footnote{%
    Each occurrence of
    $\polylog(p)$
    is shorthand
    for a concrete,
    fixed polynomial in $\log p$.
    (The notation is \emph{not} meant to imply that
    all instances of $\polylog(p)$
    be the same.)
}
Similarly, a number is called
\textit{smooth}, without further qualification, if all of its
prime factors are $\Ostar(1)$.
\textit{Polynomial time} without explicitly mentioning the variables means \enquote{polynomial in the representation size of the input}\,---\,usually the logarithms of integers.
An algorithm is called \textit{efficient} if its runs in polynomial time.

We let $\BB_{p,\infty}$ denote the quaternion algebra 
ramified at~$p$ and~$\infty$, for which we use a fixed $\Q$-basis $\langle 1,\i,\j,\ij\rangle$ such that $\j^2=-p$ and $\i$ is a nonzero endomorphism of minimal norm satisfying $\ij=-\ji$. Quaternions are treated symbolically throughout; they are simply formal linear combinations of $1,\i,\j,\ij$.

For any positive integer $N$ we write $\operatorname{sqfr}(N)$ for the squarefree part of $N$.

\subsubsection{Representation of elliptic-curve points and isogenies.}
\label{sec:isogrep}
We will generally require that the objects we are working with
have \enquote{compact} representation (that is, size $\polylog(p)$ bits),
and that maps can be evaluated at points
of representation size $\polylog(p)$
in time $\polylog(p)$.

In the interest of generality,
we will not force a specific choice
of representation,
but for concreteness,
the following data formats
are examples of suitable instantiations:
\begin{itemize}[itemsep=1ex]
    \item
            For an elliptic curve $E$
            defined over an extension of $\F_p$
            and an integer $N$,
            a point in $E[N]$ may be stored as a tuple
            consisting of one point in $E[q_i^{e_i}]$
            for each prime power $q_i^{e_i}$
            in the factorization of $N$,
            each represented naïvely as coordinates.
            This \enquote{CRT-style} representation
            has size $\polylog(p)$
            when $N$ is powersmooth and
            polynomial in $p$.
            (In some cases, storing points in $E[N]$ naïvely
            may be more efficient,
            for instance in the beneficial situation that
            $E[N]\subseteq E(\F_{p^k})$
            for some small extension degree $k$.)
    \item
            A smooth-degree isogeny may be represented
            as a sequence
            (often of length one)
            of isogenies, each of which
            is represented by an
            (often singleton)
            set of generators
            of its kernel subgroup.
    \item
            Endomorphisms of a curve $E_0$ with known
            endomorphism ring spanned by a set of
            efficiently evaluatable endomorphisms
            may be stored as a formal $\Z$-linear
            combination of such \enquote{nice} endomorphisms.
            Evaluation is done by first evaluating
            each basis endomorphism separately,
            then taking the appropriate linear combination
            of the resulting points.
\end{itemize}

In some of our algorithms,
we will deal with the \textit{restriction} of an isogeny
to some $N$-torsion subgroup,
where $N$ is smooth.
This object is motivated by
the auxiliary points $\phi_A(P_B),\phi_A(Q_B)$
given in the SIDH protocol (Section~\ref{subsec:sidh}),
and it can be represented in the same way:
The restriction of an isogeny $\phi\colon E\to E'$
to the $N$-torsion subgroup $E[N]$
is stored as a tuple of points
$(P,Q,\phi(P),\phi(Q))\in E^2\times E'^2$,
where $\{P,Q\}$ forms a basis of $E[N]$.
Then,
to evaluate $\phi$ on any other $N$-torsion point $R\in E[N]$,
we first decompose $R$ over the basis $\{P,Q\}$, yielding a linear combination $R=[i]P+[j]Q$.
(This two-dimensional discrete-logarithm computation is feasible in polynomial time
as $N$ was assumed to be smooth.)
Then, we may simply recover $\phi(R)$ as $[i]\phi(P)+[j]\phi(Q)$,
exploiting the fact that $\phi$ is a group homomorphism.

\subsection{Quantum computation cost  assumptions\label{sec:prel:quantumcost}}

In the context of NIST's post-quantum cryptography standardization process~\cite{nistpqc}, there is a significant ongoing effort to estimate the quantum cost of fundamental cryptanalysis tasks in practice.
In particular, while it seems well-accepted that Grover's algorithm provides a square-root quantum speedup, the complexity of the claimed cube-root claw-finding algorithm of Tani~\cite{tani} has been disputed by Jaques and Schanck~\cite{DBLP:conf/crypto/JaquesS19}, and the topic is still subject to ongoing research~\cite{DBLP:journals/iacr/JaquesS20}.

Several attacks we present in this paper use claw-finding algorithms as a subroutine, and the state-of-the-art algorithms against which we compare them are also claw-finding algorithms.
We stress, however, that the insight provided by our attacks is independent of the choice of the quantum computation model.
For concreteness we chose the RAM model studied in detail by Jaques and Schanck in~\cite{DBLP:conf/crypto/JaquesS19}, in which it is argued that quantum computers do not seem to offer a significant speedup over classical computers for the task of claw-finding.
 Adapting our various calculations to other existing and future quantum computing cost models, in particular with respect to claw-finding, is certainly possible.

\section{Overview}\label{sec:prob}

Standard attacks on SIDH follow two general approaches:
they either solve the supersingular isogeny problem directly,
or they reduce finding an isogeny to computing endomorphism rings.
However, SIDH is based on \ref{torsion} introduced above, where an adversary 
is also given the restriction of the secret isogeny
to the $\Ntwo$-torsion of the starting curve $E_0$. 
Exploiting this $B$-torsion information led to a new line of attack as first illustrated in~\cite{Petit2017}.

In Subsection~\ref{ss:hard_pro} we 
discuss the \ref{prob:iso} and \ref{torsion}.
Petit's work was the first to show an apparent separation 
between the hardness of \ref{torsion} and the hardness of the \ref{prob:iso} 
in certain settings.
In this work we introduce a new isogeny problem, the Shifted Lollipop Endomorphism Problem (SLE). This problem was implicit in Petit’s work~\cite{Petit2017}, which contained a purely algebraic reduction from~\ref{torsion} to this new hard problem.
We improve upon the work of~\cite{Petit2017} by giving two significantly stronger reductions.
In Subsection~\ref{subsec:petit} we sketch the main idea behind the reduction obtained by Petit.
In Subsection~\ref{subsec:techprev} we present a technical overview which covers the ideas behind our two improved reduction variants.

In Section~\ref{sec:improvements}
we will present and analyze our two reductions, and 
give algorithms to solve $\SLE$ for certain parameter sets.
As we will see, the combination of our reductions and our algorithms
to solve particular parameter sets of $\SLE$ give rise to 
two families of improvements on the torsion-point attacks of~\cite{Petit2017} on SIDH-like protocols;
these attacks will additionally exploit the dual of the secret isogeny and the Frobenius isogeny.

\subsection{Hard isogeny problems} \label{ss:hard_pro}

We first review the most basic hardness assumption in isogeny-based cryptography:

\begin{problem}[Supersingular Isogeny]\namedlabel{prob:iso}{Supersingular Isogeny Problem}
    Given a prime $p$,
	a smooth integer $A$, 
	and two supersingular elliptic curves $E_0/\F_{p^2}$ and $E/\F_{p^2}$
    guaranteed to be $A$-isogenous,
    find an isogeny~$\phi\colon E_0\to E$ of degree~$A$.%
\end{problem}

In SIDH, we denote Alice's secret isogeny $\phi_A:E_0\rightarrow E_A$,
but in general we will denote some unknown isogeny by $\phi:E_0\rightarrow E.$

Recall that Alice's public key contains not only the curve $E$ 
but also the points $\phi(P),$  $\phi(Q)$ for a fixed 
basis $\{P,Q\}$ of $E_0[B]$. 
Since $B$ is smooth, knowing $\phi(P)$ and $\phi(Q)$ allows us to efficiently compute 
the restriction of $\phi$ to the torsion subgroup $E_0[B]$~\cite{pohlig1978improved}.
Hence, it is more accurate to say that the security of SIDH is based on~\ref{torsion}, which includes this additional torsion information.

One additional fact that is often overlooked is that the hardness of SIDH is
not based on a \textit{random} instance of \ref{torsion}, because the 
starting curve is fixed and has a well-known endomorphism ring
with small degree endomorphisms.
It is known that given an explicit description of both endomorphism rings $\End(E)$ and $\End(E_0)$,
it is (under reasonable heuristic assumptions)
possible to recover the secret isogeny~\cite{galbraith2016security,takoisogeny}.
However, it is not clear if knowing only one of $\End(E)$ and $\End(E_0)$ 
makes the isogeny problem easier.

Petit was the first to observe that knowing $\End(E_0)$
could be useful to show an apparent separation between
the hardness of the~\ref{prob:iso} and the hardness of \ref{torsion}.
In particular, in~\cite{Petit2017} Petit gave a reduction from \ref{torsion} 
to the following problem,
which we will call the \textit{Shifted Lollipop Endomorphism} (SLE) Problem, where $N=B.$

\begin{problem}[Shifted Lollipop Endomorphism (SLE$_{N,\lambda}$)]\namedlabel{prob:lolli}{SLE}
Let $p$ be a prime,  $\None$ and $\Ntwo$ be
smooth coprime integers, and a supersingular elliptic curve $E_0/\F_{p^2}.$
Given a positive integer~$N$, find
the restriction of a trace-zero endomorphism $\theta\in \End(E_0)$ to $E_0[\Ntwo]$,
an integer $d$ coprime to $\Ntwo$, 
and a smooth integer $0< e < \lambda$ such that
\begin{equation}\label{eq:lolli}
\None^2 \deg\theta + d^2=Ne. 
\end{equation}
When $\lambda$ is left unspecified 
we let SLE$_{N}$ denote SLE$_{N,\Ostar(1)}$.
\end{problem}

Notice that~\ref{prob:lolli}$_N$ only depends on
the parameters $(p, \None, \Ntwo, E_0)$.
It does not depend on an unknown isogeny (it depends on $A$, which in practice will be the degree of the unknown isogeny). 
Thus solving \ref{prob:lolli}$_N$ can be completed in a precomputation phase and applied to any unknown isogeny in a fixed SIDH protocol.
In~\cite{Petit2017}, Petit was able to show solutions to \ref{prob:lolli}$_N$ where $N=B$
in certain cases, where $\End(E_0)$ was known
and has small-degree, non-scalar endomorphisms.

The goal of this work is to further investigate for which parameters
there exists
a separation between \ref{torsion} and the  \ref{prob:iso}.
Intuitively, \ref{prob:lolli}$_N$ should become easier to solve as $N$ increases,
however, this is not true in general and it is unclear how to find efficient reductions to \ref{prob:lolli}$_N$
for most values of $N$. 
To this end, we will give two reductions: one reduction from \ref{torsion}
to \ref{prob:lolli}$_{N,\lambda}$ where $N=B^2$, 
and the other where
$N=B^2p$. Both reductions run in  $\Ostar(\lambda^{\frac{1}{2}})$,
assuming $A$  has only $O(\log\log p)$ distinct prime factors,
see Theorems~\ref{more efficient} and~\ref{frob-thm-kate}. 
We then investigate their impact on supersingular isogeny-based protocols.

\subsection{Petit's torsion-point attack}\label{subsec:petit}

We begin this subsection by sketching Petit's reduction 
 from \ref{torsion} 
to \ref{prob:lolli}$_N$ where $N=B$.
Suppose we are given an instance of SSI-T,
that is, $(p,\None,\Ntwo, E_0,E, \phi\rvert_{E[\Ntwo]})$,
where the goal is to recover the unknown isogeny $\phi$.
We call an endomorphism on $E$ that has the form $\phi \circ \theta \circ \widehat{\phi}$ for some endomorphism $\theta$ on $E_0$ 
a \textbf{lollipop endomorphism}, and an endomorphism of the form $\phi \circ \theta \circ \widehat{\phi} + [d]$ for $d \in \Z$ a \textbf{shifted lollipop endomorphism};
see Figure \ref{FigLolli}
(this is the motivation for the name of Problem~\ref{prob:lolli}).
We will now discuss how to find a shifted lollipop endomorphism, as
we will show in Lemma~\ref{lem:shift_lolli} how to use the resulting shifted lollipop endomorphism 
to recover the secret isogeny.

\begin{figure}[ht]
\begin{adjustbox}{max totalsize={.8\textwidth}{.15\textheight},center}
\begin{tikzcd}[column sep=normal]
E_0 \arrow[dd,shift right, red, swap,"\phi"] \ar[loop left,<-,blue, out=130,
  in=50, distance=1cm,"\theta"] \\ 
  \\
E  \arrow[uu,shift right, red, swap, "\widehat{\phi}"]
\end{tikzcd}
\hspace{1cm}
\begin{tikzcd}[column sep=normal]
\\ \\ \\
 E
\arrow[
  out=60,
  in=100,
  loop,
  distance=3cm, "\phi \circ \theta \circ \widehat{\phi}",swap,blue ] 
  \arrow[
  out=80,
  in=120,
  loop,
  distance=3cm,"\phi \circ \theta \circ \widehat{\phi} + \bd",swap ,blue]
\end{tikzcd}
\end{adjustbox}
\caption{Lollipop and Shifted Lollipop endomorphisms. 
The name \enquote{lollipop} endomorphism was inspired by the diagram on the left.}
\label{FigLolli}
\end{figure}
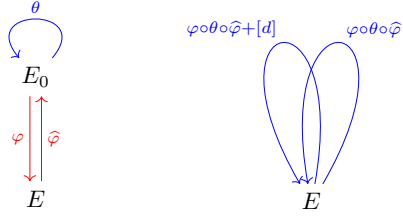

The main idea of Petit's original attack is that if
$(\theta, d,e)$ forms a solution to \ref{prob:lolli}$_B$,
then  $\tau=\phi\circ\theta \circ \hat{\phi} +[d]$ is a 
shifted lollipop endomorphism of degree $Be$ where $e$ is smooth. 
Since $\deg \tau = \Ntwo e$, it follows that $\tau$ also decomposes as $\tau = \eta \circ \phi$ for two isogenies 
$\phi:E\rightarrow E_1$ and $\eta:E_1\rightarrow E$ of 
degrees $B$ and $e$; see Figure \ref{FigOrigPetit}.

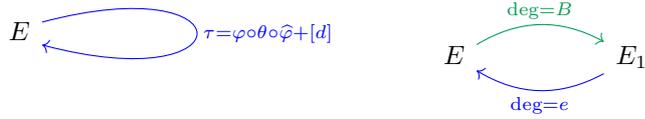
\begin{figure}[ht]
\begin{adjustbox}{max totalsize={.8\textwidth}{.2\textheight},center}
\begin{tikzcd}[row sep=2.5em]
E \arrow[loop right,  blue,distance=8em, start anchor={[yshift=1ex]east}, end anchor={[yshift=-1ex]east}]{}{\tau = \phi \circ \theta \circ \widehat{\phi}+[d]} &&\\
\end{tikzcd}
\hspace{1 cm}
\begin{tikzcd}[row sep=2.5em]
E  \arrow[rr, bend left, ForestGreen,"\deg=B"]  && E_{1}  \arrow[ll, bend left, blue,"\deg=e"]
\end{tikzcd}
\end{adjustbox}
\caption{A decomposition of $\tau$ in Petit's original attack}\label{FigOrigPetit}
\end{figure}

The restriction of $\phi$ to $E_0[\Ntwo]$ given in Alice's public key
can be used to construct the $B$\nobreakdash-isogeny in the decomposition 
(the green arrow in Figure \ref{FigOrigPetit}), see~\cite{Petit2017} for details.
This can be done efficiently if $\theta$ is in a representation 
that can be efficiently evaluated on $E_0[\Ntwo]$.
As $e$ is smooth, the $e$-isogeny
in the decomposition (the blue arrow)
can be found via brute-force in time $\Ostar(e^{\frac{1}{2}})$. This gives us $\tau$.
Subtracting $\bd$ from $\tau$ gives $ \phi \circ \theta \circ \widehat{\phi}$.

Suppose the lollipop endomorphism $\rho= \phi \circ \theta \circ \widehat{\phi}$ 
is cyclic.  Then $\ker(\rho)\cap E_1[A]=\ker\widehat{\phi}$.
(The kernel of $\rho$ can be calculated as $A$ is smooth.)
Once we have found $\widehat{\phi},$ it is easy to find 
the unknown isogeny $\phi.$
If $\rho$ is not cyclic, then one can still recover $\phi$ if  $A$  has $O(\log\log p)$ distinct prime factors by using a technical approach developed in \cite[Section 4.3]{Petit2017}, for further details see Lemma \ref{lem:shift_lolli}. Thus we have a reduction from \ref{torsion}
to \ref{prob:lolli}$_N$ where $N=B$, which is formalized in the following theorem.

\begin{theorem}\label{thm:Petit_original}
Suppose we are given an instance of~\ref{torsion} where $A$ has $O(\log\log p)$ distinct prime factors.
Assume we are given
the restriction of  a trace-zero endomorphism $\theta\in \End(E_0)$
to $E_0[B]$,
an integer $d$ coprime to $B$, 
and a smooth integer $e$ such that
$$\deg(\phi\circ\theta\circ\hat{\phi}+[d])=Be. $$
Then we can compute $\phi$ in time 
    $\Ostar(\hspace{-.2em}\sqrt{e})=\bigO(\hspace{-.2em}\sqrt{e}\cdot\polylog(p))$.
\end{theorem}

\subsection{Technical preview}\label{subsec:techprev}

Although the attack of~\cite{Petit2017} was the first to establish an apparent
separation between the hardness of \ref{torsion}
and the hardness of supersingular isogeny problem,
it did not affect the security of any  cryptosystems that appear in the literature.
In this paper, we give two attacks improving upon~\cite{Petit2017} by additionally exploiting the dual and the Frobenius conjugate of the secret isogeny respectively.

The first attack, which we call the \textbf{dual isogeny attack},
corresponds to reducing \ref{torsion}
to \ref{prob:lolli}$_N$ where $N=B^2$.\footnote{
See also \cite{bottinellidark} for a different reduction to \ref{prob:lolli}$_{B^2}$,
cf. Subsection~\ref{sec:earlier-work}.
}
The second attack, which we call the \textbf{Frobenius isogeny attack},
corresponds to reducing \ref{torsion}
to \ref{prob:lolli}$_N$ where $N=B^2p$.
The run-time of each attack depends on the parametrization of the cryptosystem, 
and one may perform better than the other for some choices of parameters. 
We show the details in Theorem~\ref{more efficient} and Theorem~\ref{frob-thm-kate}.
We begin by sketching the main ideas behind the reductions.

In the dual isogeny attack,
finding a solution $(\theta,d,e)$ to \ref{prob:lolli}$_N$ with $N=B^2$
corresponds to finding a shifted lollipop endomorphism 
$\tau = \phi \circ \theta \circ \widehat{\phi} +[d]$ on $E$ of degree  $ B^2e$, with $e$ smooth.
Assume $\tau$ is cyclic (only for simplicity in this overview; the general case is Theorem~\ref{more efficient}). Then
since $\deg \tau = \Ntwo^2 e$, it follows that $\tau$ also decomposes as $\tau = \phi'\circ \eta \circ \phi$ for three isogenies 
$\phi, \eta$ and $\phi'$ of 
degrees $B, e$ and $B$, respectively: see the middle diagram in Figure \ref{FigDiamond}. \CP{I am not sure how much insight is provided by this paragraph? I mean, when I wrote the first paper I had or stated as an open problem to replace $B$ by $B^2$ but crucially did not know how to do it; the paragraph says nothing about this gap and only focuses on trivial stuff}

In the Frobenius isogeny attack,
finding a solution $(\theta,d,e)$ to  \ref{prob:lolli}$_N$ with $N=B^2p$
corresponds to finding a shifted lollipop endomorphism $\tau= \phi \circ \theta \circ \widehat{\phi} +[d]$ that has degree  $B^2pe$, with $e$ smooth.
Assume $\tau$ is cyclic (only for simplicity in this overview; the general case is Theorem~\ref{frob-thm-kate}). 
Since $\deg \tau = \Ntwo^2 p e$, it follows that $\tau$ also decomposes as $\tau = \phi'\circ \eta \circ \pi \circ \phi$ for four isogenies 
$\phi, \pi, \eta$ and $\phi'$ of 
degrees $B, p, e$ and $B$, respectively,
where the isogeny of degree $p$ is the Frobenius map $(x,y) \rightarrow (x^p,y^p)$: see the right-hand diagram in Figure \ref{FigDiamond}. 

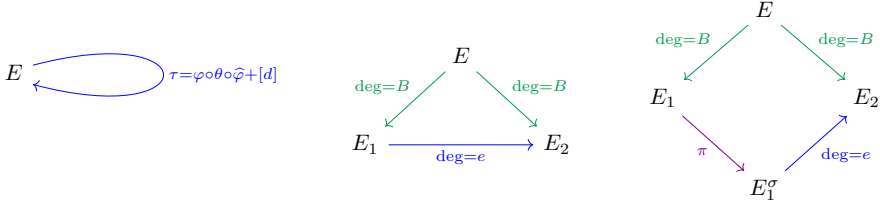
\begin{figure}[h]
\begin{adjustbox}{max totalsize={.8\textwidth}{.24\textheight},center}
\begin{tikzcd}[row sep=2.5em]
E \arrow[loop right,  blue,distance=8em, start anchor={[yshift=1ex]east}, end anchor={[yshift=-1ex]east}]{}{\tau = \phi \circ \theta \circ \widehat{\phi} + \bd} &&\\
\end{tikzcd}
\hspace{7 mm}
\begin{tikzcd}[row sep=2.5em]
&E \arrow[dl,ForestGreen,"\deg=B",swap] \arrow[dr,ForestGreen,"\deg=B"]  &\\ 
E_{1}  \arrow[rr,blue,"\deg=e",swap] && E_{2}   
\end{tikzcd}
\hspace{7 mm}
\begin{tikzcd}[row sep=2.5em]
&E \arrow[dl,ForestGreen,"\deg=B",swap] \arrow[dr,ForestGreen,"\deg=B"]  &\\ 
E_{1}  \arrow[dr,violet,"\pi",swap] && E_{2}  \arrow[dl,blue,<-,"\deg=e"]  \\
&E_{1}^\sigma&
\end{tikzcd}
\end{adjustbox}
\caption{A decomposition of $\tau$ in our two new attacks. Note: we take the dual of one isogeny
in the middle and right-hand diagrams to reverse its arrow.}\label{FigDiamond}
\end{figure}

In both attacks we find $\tau$ by calculating each isogeny in the
decomposition of $\tau$.
In particular, we will use
the restriction of $\phi$ to $E_0[\Ntwo]$ given by Alice's public key
to construct the two $B$-isogenies in the decomposition 
(the green arrows in Figure \ref{FigDiamond}).
Again this can be done efficiently if $\theta$ is in a representation 
that can be efficiently evaluated on $E_0[\Ntwo]$.
As $e$ is smooth we can calculate the $e$-isogeny
in the decomposition (the blue arrow)
via brute-force in time $\Ostar(e^\frac{1}{2})$.
As we can always construct the Frobenius map $\pi$ (the purple arrow),
this gives us $\tau$.
The rest of the proof proceeds as with Petit's original attack
assuming $A$  has $O(\log\log p)$ distinct prime factors, 
see Lemma~\ref{lem:shift_lolli} for details.

\begin{remark}
These methods are an improvement over Petit's original attack, which 
only utilized a shifted lollipop endomorphism $\tau$ of degree $Be$.
There $\tau$ could only be decomposed into
two isogenies of degree $B$ and  $e$ as in Figure \ref{FigOrigPetit}.
Intuitively, Petit's original attack was less effective as
a smaller proportion of $\tau$ could
be calculated directly, and hence a much larger (potentially exponential) proportion
of the endomorphism needed to be brute forced.
It is not clear how to find a better decomposition with more computable isogenies
than those given in Figure~\ref{FigDiamond}
using the fixed parameters and public keys given in SIDH protocols.
Furthermore, 
we give reductions both to \ref{prob:lolli}$_{B^2}$ and \ref{prob:lolli}$_{B^2p}$,
as increasing the degree of $\tau$ does not necessarily make a
shifted lollipop endomorphism $\tau$ easier to find.
\end{remark}

Once an appropriate $(\theta,d,e)$ is found for a particular setting (that is, a
particular choice of $p,A,B,E_0$), then the reduction outlines an algorithm 
that can be 
run to find any unknown isogeny $\phi:E_0\rightarrow E$.
In other words, there is first a precomputation needed to solve \ref{prob:lolli}$_N$ and find a particular  $(\theta,d,e)$.
Using this $(\theta,d,e)$, the above reduction
gives a key-dependent algorithm to find a particular unknown isogeny $\phi:E_0\rightarrow E$.

We now outline how to solve \ref{prob:lolli}$_N$ when $N=B^2p$
for a particular choice of $E_0$,
see Algorithm~\ref{alg:frob-eqn} for details.
A similar technique works when $N=B^2$,
see Algorithm~\ref{alg:death-eqn}.
In most supersingular isogeny-based protocols,
the endomorphism ring of $E_0$ is known.  
A common choice of starting curve, in SIKE for example\footnote{%
    Note that the newest version of SIKE \cite{azarderakhsh2019supersingular} changed the starting curve to a 2-isogenous neighbour of $j=1728$, but this does not affect the asymptotic complexity of any attack.
    }, is where $E_0$
has \jinvariant 1728.
We show that in the Frobenius isogeny attack finding a shifted lollipop endomorphism of degree $B^2pe$ reduces to finding a solution of
\begin{equation}\label{Petit_equation}
\None^2(a^2+b^2)+pc^2= \Ntwo^2e.
\end{equation}

To proceed choose $c$ and $e$ such that $pc^2=\Ntwo^2e$ modulo $\None^2$.
The remaining equation $a^2+b^2=\frac{\Ntwo e-pc^2}{\None^2}$ can be solved by Cornacchia's algorithm a large percentage of time; else the procedure is restarted with a new choice of $e$ or $c$.

This method of solving \ref{prob:lolli}$_N$ can be used to attack the $n$-party group key agreement \cite{DBLP:journals/iacr/AzarderakhshJJS19}.
We analyze this attack in Section~\ref{ss:frob_nonpoly},
and show that it can be expected, heuristically, to run in polynomial time 
for $n\geq 6$.
The results are summarized in Table~\ref{table:group-key-exchange}, 
and an implementation of this attack for $n=6$ 
can be found at~\url{https://github.com/torsion-attacks-SIDH/6party}.

While we use the Frobenius isogeny attack to highlight vulnerabilities in the isogeny-based group key agreement,
we use the ideas from the dual isogeny attack to investigate situations, namely  different starting curves and base fields, which would result in insecure schemes.

\section{Improved torsion-point attacks}
\label{sec:improvements}

In this section, we generalize and improve upon the torsion-point attacks from Petit's 2017 paper~\cite{Petit2017};
in our notation, Petit's attack can be viewed as a reduction of \ref{torsion} to \ref{prob:lolli}$_{B,\lambda}$
together with $\Ostar(1)$-time algorithm to solve \ref{prob:lolli}$_{B,\lambda}$ for certain parameter sets.
In Subsection~\ref{sec:improved-torsion}, we introduce
two new reductions from~\ref{torsion} to \ref{prob:lolli}$_{N,\lambda}$, where $N=B^2$ and $N=B^2p$, respectively.
The runtime of both reductions is
$\Ostar(\lambda^{\frac{1}{2}})$.
The reductions exploit two new techniques:
a dual isogeny and the Frobenius isogeny.

In Subsection \ref{sec:solving-norm} we 
give an algorithm to solve \ref{prob:lolli}$_N$ for $N=B^2$ and $N=B^2p$,
for specific starting curve\footnote{%
    More generally, these attacks apply for any \enquote{special} starting curve in the sense of~\cite{Kohel2014}.
}
$E_0$ under explicit, plausible heuristics (Heuristic~\ref{heur:deathattack-kate} and~\ref{heur:deathattack-kate2}, respectively).
For certain parameters 
these algorithms solve \ref{prob:lolli}$_{N,\lambda}$ for $N=B^2$ for 
$\lambda = \Ostar(1)$ in polynomial time
and \ref{prob:lolli}$_N$ for $N=B^2p$ for $\lambda =\bigO(\log p)$ in polynomial time.
For these parameters, this solves \ref{torsion}
in time $\Ostar(1).$

\subsection{Improved torsion-point attacks}
\label{sec:improved-torsion}

The main ingredient in Petit's~\cite{Petit2017} attack can be viewed  as a reduction of \ref{torsion} to \ref{prob:lolli}$_B$. 
In this section we introduce our first extension of this attack: the \emph{dual isogeny attack},
which works by exploiting the dual isogeny of 
the (shifted lollipop) endomorphism $\tau$ on $E$. 
We begin by giving the reduction for the dual isogeny attack.

\begin{theorem}\label{more efficient}
Suppose we are given an instance of~\ref{torsion} where $A$ has $O(\log\log p)$ distinct prime factors.
Assume we are given
the restriction of  a trace-zero endomorphism $\theta\in \End(E_0)$
to $E_0[B]$,
an integer $d$ coprime to $B$, 
and a smooth integer $e$ such that
$$\deg(\phi\circ\theta\circ\hat{\phi}+[d])=B^2e. $$
Then we can compute $\phi$ in time 
    $\Ostar(\hspace{-.2em}\sqrt{e})=\bigO(\hspace{-.2em}\sqrt{e}\cdot\polylog(p))$.
\end{theorem}
We first state a technical lemma which mostly follows from~\cite[Section 4.3]{Petit2017}.

\begin{lemma}\label{lem:shift_lolli}
Let $\None$ be a smooth integer with $O(\log\log p)$ distinct prime factors, and let $E_0/\F_{p^2}$ and $E/\F_{p^2}$ be two supersingular elliptic curves
connected by an unknown degree-$\None$ isogeny $\phi$.
Suppose we are given the restriction of some $\tau\in \End(E)$ to $E[A]$,
where $\tau$ is of the form $\tau=\phi\circ\theta\circ\hat{\phi}+[d]$ 
such that if $E[m] \subseteq \ker \tau$ then $m\mid 2$.
Then we can compute $\ker \phi$ in time $\Ostar(1)$.
\end{lemma}

\CP{Compared to ac17, this restricts $m$ and $\tau$, I suppose to guarantee the complexity. Would it make sense (for teh sake of simplicity/readability) to further restrict to $m=1$ (and maybe remark about general version in appendix)? It may make the statement easier to read, and I am right this won't affect the results significantly? (I know you have been discussing this extensively and I did not follow the discussion, so I will skip the proof and let you decide on what's best)}

\begin{proof}
    \iffinal
    See the full version~\cite[Appendix A.1]{fullversion}.
    \else
    See Appendix~\ref{proof-shift_lolli}.
    \fi
\end{proof}

\begin{proof}[of Theorem~\ref{more efficient}]
Suppose we have $d,e$ and the restriction of $\theta$ to $E[B]$ satisfying the conditions above.
We wish to find an explicit description of $\tau=\phi\circ\theta\circ\hat{\phi}+[d]$.  
Let $m$ be the largest integer dividing $B$ such that $E[m] \subseteq \ker \tau$.
Since the degree of $\tau$ is $B^2e$, there exists a decomposition of the form
$\tau= \psi' \circ \eta \circ \psi \circ [m]$, where $\psi$ and $\psi'$ are isogenies of degree $B/m$, $\psi$ is cyclic,
and $\eta$ is an isogeny of degree $e$. 

We proceed by deriving the maps in this decomposition. 
Since $\tau$ factors through $[m]$, this implies $m$ divides $\tr(\tau)=2d$.
As we chose $d$ coprime to $B$, this shows $m \in \{1,2\}$.

To compute $\psi$ and $\psi'$, we start by finding the restriction of $\tau$ to the $B$-torsion.
This can be computed from what we are given:  the restrictions of $\theta$, $[d]$, $\phi$, hence $\hat{\phi}$, to the $B$-torsion of the relevant elliptic curves. 
This also allows us to compute $m$ explicitly, 
as the largest integer dividing $B$ such that $E[m] \subseteq \ker\tau \cap E[B]$.

Let $\tau' = \psi' \circ \eta \circ \psi$.
The isogeny $\psi$ can now be computed from the restriction of $\tau$ to $E[B]$ via
$$\ker \psi =  \ker \tau' \cap ([m] \cdot E[B]) = (\ker\tau \cap E[B])/E[m].$$  
From the cyclicity of $\psi$, we can also deduce that $\ker\hat{\psi'} = \tau(E[B])$, which gives $\psi'$ explicitly.

Finally,
we recover the isogeny $\eta$ by a generic meet-in-the-middle algorithm,
    which runs in time $\Ostar(\hspace{-.2em}\sqrt{e})$ since $e$ is smooth.
    Note that if $e = \Ostar(1)$, then the entire algorithm runs in time $\polylog(p)$.
In this way we have found $\tau$ explicitly, and by Lemma~\ref{lem:shift_lolli} can compute $\phi$.
\qed
\end{proof}

Next we give the reduction for the \textit{Frobenius isogeny attack},
which works by exploiting the Frobenius isogeny on $E$
to improve, or at least alter, the dual attack.

\begin{theorem}\label{frob-thm-kate}
Suppose we are given an instance of~\ref{torsion} where $A$ has at most $\bigO(\log\log p)$ distinct prime factors.
Assume we are given the restriction of a trace-zero endomorphism $\theta \in \End(E_0)$ to $E_0[B]$, an integer $d$ coprime to $B$, and a smooth integer $e$ such that
\[
\deg( \phi \circ \theta \circ \hat{\phi} + [d] ) = B^2 p e
\,\text.
\]
Then we can compute $\phi$ in time 
    $\Ostar(\hspace{-.2em}\sqrt{e})=\bigO(\hspace{-.2em}\sqrt{e}\cdot\polylog(p))$.
\end{theorem}

\begin{proof}
Let $\tau = \phi \circ \theta \circ \hat{\phi} + [d]$.  As in the proof of Theorem \ref{more efficient}, we can decompose $\tau$ as $\psi' \circ \eta \circ \psi \circ [m]$, where $\eta$ has degree $pe$, and compute $\psi$ and $\psi'$ efficiently.

We are left to recovering $\eta$.  Instead of using a generic meet-in-the-middle algorithm, we observe that $\eta$ has inseparable degree $p$ (since we are in the supersingular case).  Thus, $\eta = \eta' \circ \pi$, where $\pi$ is the $p$-power Frobenius isogeny, and $\eta'$ is of degree $e$. 
We use the meet-in-the-middle algorithm 
on $\eta'$ and recover the specified runtime.
\qed
\end{proof}

\begin{remark}
It is a natural question why we stick to the $p$-power Frobenius and why the attack doesn't give a better condition for a higher-power Frobenius isogeny. The reason is that for supersingular elliptic curves defined over $\mathbb{F}_{p^2}$, the $p^2$-power Frobenius isogeny is just a scalar multiplication followed by an isomorphism (since every supersingular $j$-invariant lies in $\mathbb{F}_{p^2}$), and hence would already be covered by the method of Theorem~\ref{more efficient}.

More generally, see Section~\ref{sec:open} for a more abstract viewpoint that subsumes both of the reductions given above (but has not led to the discovery of other useful variants thus far).
\end{remark}

The complexity of both attacks relies on whether one can find a suitable endomorphism $\theta$ with $e$ as small as possible. In the next subsection we 
will establish criteria when we can find a suitable $\theta$ when the starting curve has $j$-invariant 1728.

\subsection{Solving norm equations}
\label{sec:solving-norm}

In Subsection~\ref{sec:improved-torsion}
we showed two reductions (Theorem~\ref{more efficient} and Theorem~\ref{frob-thm-kate})
from~\ref{torsion}
to \ref{prob:lolli}$_N$ where $N=B^2$ and $N=B^2p.$ 
To complete the description of our attacks, we discuss how
to solve~\ref{prob:lolli}$_N$ in these two cases;
that is, we want to find solutions ($\theta,d,e$) to 
$$\deg(\phi\circ\theta\circ\hat{\phi}+[d])=\None^2 \deg\theta + d^2=Ne,$$
where $N=B^2$ or $N=B^2p$.

The degree of any endomorphism of $E_0$ is represented by
a quadratic form that depends on $E_0$.
To simply our exposition we choose
$E_0/\F_p\colon y^2 = x^3 + x$ (having $j=1728$), where $p$ is congruent to $3\pmod 4$.
In this case the endomorphism ring $\End(E_0)$ has a particularly simple norm form.
To complete the dual isogeny attack, it suffices to find a solution to the norm
Equation (\ref{the_eqnofdeath}):

\begin{corollary}\label{eqnofdeath}
Let $p\equiv3\pmod4$
and $j(E_0)=1728$. 
Consider coprime smooth integers $A,B$ such that $A$ has (at most) $O(\log\log p)$ distinct prime factors and suppose that we are given an integer solution
$(a,b,c,d,e)$,
with $e$ smooth,
to the equation
  \begin{equation}
        \label{the_eqnofdeath}
        A^2(p a^2 + p b^2 + c^2 )+ d^2 =  B^2 e
        \,\text.
    \end{equation}
Then we can solve~\ref{torsion} with the above parameters in time $\Ostar(\hspace{-.2em}\sqrt{e})$.
\end{corollary}
\begin{proof}
    Let $\iota \in \End(E_0)$ be such that $\iota^2=[-1]$ and let $\pi$ be the Frobenius endomorphism of $E_0$. 
    Let $\phi$ be as in Theorem~\ref{more efficient}.
    The endomorphism $\theta = a\iota\pi + b\pi + c\iota$ and the given choice of $d$ satisfies the requirements of Theorem~\ref{more efficient}.
\qed
\end{proof}
To complete the Frobenius isogeny attack, we find a solution to the norm
equation (\ref{eqnoflessdeath}):

\begin{corollary}\label{eqnoflessdeath}
Let $p\equiv3\pmod4$
and $j(E_0)=1728$. 
Consider coprime smooth integers $A,B$ such that $A$ has (at most) $O(\log\log p)$ distinct prime factors and suppose that we are given an integer solution
$(a,b,d,e)$,
with $e$ smooth,
to the equation
  \begin{equation}
        \label{the_eqnoflessdeath}
        A^2(a^2 + b^2 )+ pd^2 =  B^2 e
        \,\text.
    \end{equation}
Then we can solve~\ref{torsion} with the above parameters in time $\Ostar(\hspace{-.2em}\sqrt{e})$.
\end{corollary}
\begin{proof}
    With $\iota$ and $\pi$ as in the proof of Corollary \ref{eqnofdeath}, and $\phi$ as in Theorem~\ref{frob-thm-kate},
    the endomorphism $\theta = a\iota\pi + b\pi$, together with the choice $c=0$ satisfies the requirements of Theorem~\ref{frob-thm-kate} (to see this, multiply \eqref{the_eqnoflessdeath} through by $p$).
\qed
\end{proof}

Now we present two algorithms for solving each norm equation (\ref{the_eqnofdeath})
and (\ref{the_eqnoflessdeath}). The algorithms are similar in nature but they work on different parameter sets. See Algorithms \ref{alg:death-eqn} and \ref{alg:frob-eqn}.

\begin{algorithm}
    \caption{\,Solving norm equation~\ref{the_eqnofdeath}.} \label{alg:death-eqn}
    \vspace{.2ex}
    \Input {%
            SIDH parameters $p,A,B$.
        }
    \Output {
        A solution $(a,b,c,d,e)$
        to \eqref{the_eqnofdeath}.
        }
    %
    %
    \vspace{.4ex}
           Set $e:=2$.
           \;
           \If{$e$ \textnormal{is a quadratic non-residue mod} $A^2$\label{step:compute-d}}{
           Set $e := e+1$ and go to Step~\ref{step:compute-d}.\label{step:qre}}
           Compute $d$ such that $d^2\equiv eB^2\pmod{A^2}$.
           \;
    \If{$eB^2 - d^2$ \textnormal{ is a quadratic non-residue mod} $p$\label{step:qr-mod-p}}{
        Set $e:=e+1$ and go to Step~\ref{step:compute-d}.\label{step:qrd}
        \;
        }
        Compute $c$ as the smallest positive integer such that
        $c^2A^2 \equiv eB^2-d^2 \pmod p$.\label{computec}
        \;
    \If{$eB^2>d^2+c^2A^2$}{
        \If{$\frac{eB^2-d^2-c^2A^2}{A^2p}$ \textnormal{ is prime}\label{step:isprime}}{
            \If{$\frac{eB^2-d^2-c^2A^2}{A^2p} \equiv 1 \pmod {4}$\label{step:is1mod4}}{
                   Find $a,b \in \Z$ such that $a^2+b^2=\frac{eB^2-d^2-c^2A^2}{A^2p}$.
                   \;
                   \Return{ $(a,b,c,d,e)$.}
                }
            }
            Set $e:= e+1$ and go to Step~\ref{step:compute-d}.
        }
    \Else{
     \Return{\textnormal{Failure.}}
     }
\end{algorithm}

\begin{algorithm}
    \caption{\,Solving norm equation~\ref{the_eqnoflessdeath}.} \label{alg:frob-eqn}
    \vspace{.2ex}
    \Input {%
            SIDH parameters $p,A,B$.
        }
    \Output {%
        A solution $(a,b,d,e)$
        to \eqref{the_eqnoflessdeath}.
        }
    %
    %
    \vspace{.4ex}
           Set $e:=1$.
           \;
    \While{$\frac{eB^2}{p}$ \textnormal{ is a quadratic non-residue mod } $A^2$
        \label{step:inc-e}}
        {
        Set $e:=e+1$.
        \;
        }
        Compute $d$ such that
        $eB^2 \equiv pd^2 \pmod {A^2}$.
        \; \label{step:findc-frob}
    \If{$eB^2>pd^2$}{
        \If{$\frac{eB^2-pd^2}{A^2}$ \textnormal{ is prime}\label{step:isprime2}}{
            \If{$\frac{eB^2-pd^2}{A^2} \equiv 1 \pmod {4}$\label{step:is1mod42}}{
                   Find $a,b \in \Z$ such that $a^2+b^2=\frac{eB^2-pd^2}{A^2}$.
                   \;\label{step:cornacchia-frob}
                   \Return{ $(a,b,d,e)$.}
                }
            }
            Set $e:= e+1$ and go to Step~\ref{step:inc-e}.
        }
    \Else{
     \Return{\textnormal{Failure.}}
     }
\end{algorithm}

\subsection{Runtime and justification for Algorithms \ref{alg:death-eqn} and \ref{alg:frob-eqn}}

The remainder of this section is devoted to providing justification that the algorithms succeed in polynomial time.

\begin{heuristic}\label{heur:deathattack-kate}
Let $p, A, B$ be SIDH parameters.
Note that for each $e$, the equation
\begin{equation}
\label{eqn:heur1-kate}
eB^2 = d^2 + c^2A^2 \pmod{A^2 p},
\end{equation}
may or may not have a solution $(c,d)$.  We assert two heuristics:
\begin{enumerate}
    \item \label{heurdeath-kate-pt1} Amongst invertible residues $e$ modulo $A^2p$, which are quadratic residues modulo $A^2$, the probability of the existence of a solution is approximately $1/2$.
    \item \label{heurdeath-kate-pt2} Amongst those $e$ for which there is a solution, and for which the resulting integer
\begin{equation}\label{eq:heurdeath-kate}
\frac{B^2e-d^2-c^2A^2}{A^2p}
\end{equation}
is positive, the probability that \eqref{eq:heurdeath-kate} is a prime congruent to $1$ modulo $4$ is expected to be approximately the same as the probability that a random integer of the same size is prime congruent to $1$ modulo $4$.
\end{enumerate}
\end{heuristic}

\textit{Justification.}
By the Chinese remainder theorem, solving \eqref{eqn:heur1-kate} amounts to solving $eB^2 \equiv d^2 \pmod{A^2}$ and $eB^2 \equiv d^2 + c^2A^2 \pmod{p}$.  If $e$ is a quadratic residue modulo $A^2$, then the first of these equations has a solution $d$.  Using this $d$, the second equation has either no solutions or two, with equal probability.  This justifies the first item.

For the second item, this is a restriction of the assertion that the values of the quadratic function $B^2e - d^2 - c^2A^2$, in terms of variables $e$, $c$ and $d$, behave, in terms of their factorizations, as if they were random integers.  In particular, the conditional probability that the value has the form $A^2pq$ for a prime $q \equiv 1 \pmod 4$, given that it is divisible by $A^2p$, is as for random integers.

\begin{proposition}\label{prop:eq1} Let $\epsilon > 0$.
Under Heuristic~\ref{heur:deathattack-kate}, 
if $B>pA$ and $p>A$, but $B$ is at most polynomial in $A$, then Algorithm \ref{alg:death-eqn} returns a solution $(a,b,c,d,e)$ with $e = O(\log^{2+\epsilon}(p))$ in polynomial time.
\end{proposition}
\begin{proof}
Checking that a number is a quadratic residue modulo $p$ can be accomplished by a square-and-multiply algorithm.
Checking that a certain number is prime can also be accomplished in polynomial-time.
Representing a prime as a sum of two squares can be carried out by Cornacchia's algorithm.  Suppose one iterates $e$ a total of $X$ times.

For the algorithm to succeed, we must succeed in three key steps in reasonable time:  first, that $e$ such that $e$ is a quadratic residue modulo $A^2$ (Step \ref{step:compute-d}) and second, that $eB^2-d^2$ is a quaratic residue modulo $p$ (Step \ref{step:qr-mod-p}), and third, that $\frac{eB^2-d^2-c^2A^2}{A^2p}$ is a prime congruent to $1$ modulo $4$ (Step \ref{step:isprime}--\ref{step:is1mod4}).  Suppose we check values of $e$ up to size $X$.

For Step \ref{step:compute-d}, it suffices to find $e$ an integer square, which happens $1/\sqrt{X}$ of the time.
When this is satisfied, the resulting $d$ can be taken so $d < A^2$.  For Step \ref{step:qr-mod-p}, under Heuristic~\ref{heur:deathattack-kate} Part~\ref{heurdeath-kate-pt1}, the probability that a corresponding $c$ exists is $1/2$.  Such a $c$ can be taken with $c < p$.  Under the given assumption that $B>pA$ and $p>A$, then
\[eB^2 \geq 2B^2 > 2p^2A^2 > p^2A^2 + A^4 > c^2A^2 + d^2.\]
So the quantity in Heuristic~\ref{heur:deathattack-kate} Part~\ref{heurdeath-kate-pt2} is positive.  We can bound it by $eB^2/pA^2$.  Since $B$ is at worst polynomial in $A$, the quantity $B^2/pA^2$ is at worst polynomial in $p$, say $p^k$.  Hence, for Step \ref{step:isprime}--\ref{step:is1mod4}, one expects at a proportion $1/\log(p^kX)$ of successes to find a prime congruent to $1$ modulo $4$.  Such a prime is a sum of two squares, and the algorithm succeeds.

Finally, we set $X = \log^{2 + \epsilon}(p)$ to optimize the result.  If one iterates $e$ at most $\log^{2+\epsilon}(p)$ times, one expects to succeed at Step \ref{step:compute-d} at least $\log^{1+\epsilon}(p))$ times, to succeed at Step \ref{step:qr-mod-p} half of those times, and to succeed at Steps \ref{step:isprime} and \ref{step:is1mod4} at least $1/\log(p^k\log^{2+\epsilon}(p))$  of those times.  This gives a total probability of success, at any one iteration, of $1/4k\log^{2+\epsilon}(p)$.  Hence we expect to succeed with polynomial probability.
\end{proof}

For the analysis of Algorithm 2, the following technical lemma is helpful.

\begin{lemma}\label{technical-lemma}
Let $M$ be an integer.  Let $r$ be an invertible residue modulo $M$. Then the pattern of $e$ such that $re$ is a quadratic residue repeats modulo $N = 4\operatorname{sqfr}(M)$, four times the squarefree part of $M$.  Among residues modulo $4\operatorname{sqfr}(M)$, a proportion of $1/2^\ell$ of them are solutions, where $\ell$ is the number of distinct primes dividing $M$.
\end{lemma}
\begin{proof}
Suppose $M$ has prime factorization $M = \prod_{i} l_i^{e_i}$.  A residue $x$ modulo $M$ is a quadratic residue if and only if it is a quadratic residue modulo $l_i^{e_i}$ for every $i$. For odd $l_i$, a residue modulo $l_i^{e_i}$ is a quadratic residue if and only if it is a quadratic residue modulo $l_i$, by Hensel's lemma.  And a residue modulo $2^{e}$, $e \ge 3$, is a quadratic residue if and only if it is a quadratic residue modulo $8$.  By the Chinese remainder theorem, $re$ is a quadratic residue modulo $M$ if and only if $re$ is a quadratic residue modulo $4\operatorname{sqfr}(M)$.
\end{proof}

\begin{heuristic}\label{heur:deathattack-kate2}
Let $p, A, B$ be SIDH parameters.  Let $\ell$ be the number of distinct prime divisors of $A$.
Note that for each $e$, the equation
\begin{equation}
\label{eqn:heur2-kate}
eB^2 = pd^2 \pmod{A^2}
\end{equation}
may or may not have solutions $d$.  We assert two heuristics:
\begin{enumerate}
    \item \label{heurdeath-kate2-pt1} As $e$ varies, the probability that it has solutions is $1/2^\ell$.
    \item \label{heurdeath-kate2-pt2} Amongst those $e$ for which there is a solution, and for which the resulting integer
\begin{equation}\label{eq:heurdeath-kate2}
\frac{B^2e-pd^2}{A^2}
\end{equation}
is positive, the probability that \eqref{eq:heurdeath-kate2} is a prime congruent to $1$ modulo $4$ is expected to be approximately the same as the probability that a random integer of the same size is prime congruent to $1$ modulo $4$.
\end{enumerate}
\end{heuristic}

\textit{Justification.}
Consider the first item.  Modulo each prime dividing $A^2$, the quadratic residues vs. non-residues are expected to be distributed \enquote{randomly}, resulting in a random distribution modulo $4\operatorname{sqfr}(A)$, by Lemma \ref{technical-lemma}.  

For the second item, this is a restriction of the assertion that the values of the quadratic function $B^2e - pd^2$, in terms of variables $e$ and $d$, behave, in terms of their factorizations, as if they were random integers.  In particular, the conditional probability that the value has the form $A^2q$ for a prime $q \equiv 1 \pmod 4$, given that it is divisible by $A^2$, is as for random integers.  

\begin{proposition}\label{prop:eq2}
Under Heuristic~\ref{heur:deathattack-kate2}, if $B > \sqrt{p}A^2$, $A$ has $O(\log\log p)$ distinct prime factors, $B$ is at most polynomial in $A$, and\footnote{In the proof, it suffices to take $p^k > A$ for any $k$.} $p > A$, 
then Algorithm~\ref{alg:frob-eqn} returns a solution $(a,b,d,e)$ with $e = O(\log p)$ in polynomial time.
\end{proposition}

\begin{proof}
Checking that a number is a quadratic residue can be accomplished by a square-and-multiply algorithm. 
Checking that a certain number is prime can also be accomplished in polynomial-time.
Representing a prime as a sum of two squares can be carried out by Cornacchia's algorithm. 

For the algorithm to succeed, we must succeed in two key steps in reasonable time:  first, that $e$ such that $eB^2/p$ is a quadratic residue (Step \ref{step:inc-e}) and second, that $\frac{eB^2-pd^2}{A^2}$ is a prime congruent to $1$ modulo $4$ (Step \ref{step:isprime2}--\ref{step:is1mod42}).  Suppose we check values of $e$ up to size $X$.

By Heuristic~\ref{heur:deathattack-kate2} Part~\ref{heurdeath-kate-pt1}, we expect to succeed at Step \ref{step:inc-e} with probability $1/2^\ell$, where $\ell$ is the number of distinct prime divisors of $A$.  \KS{Needed here?}

When this is satisfied, the resulting $d$ can be taken so $d < A^2$.  
Under the given assumption that $B>\sqrt{p}A$, then
\[eB^2 \geq B^2 > pA^4 > pd^2.\]
So the quantity in Heuristic~\ref{heur:deathattack-kate2} Part~\ref{heurdeath-kate-pt2} is positive.  We can bound it above by $eB^2/A^2$, and using the assumption that $B$ is at most polynomial in $A$, we bound this by $<p^kX$ for some $k$.  So we expect to succeed in Step \ref{step:isprime2}--\ref{step:is1mod42} with probability $1/2\log(p^kX)$.  The resulting prime is a sum of two squares, and the algorithm succeeds.
Thus, taking $X = O(\log p)$ suffices for the statement.
\end{proof}

\begin{remark}
In practice, in Algorithm~\ref{alg:frob-eqn} it may be more efficient to increment $d$ by multiples of $A^2$ 
in place of incrementing $e$.
This however makes the inequalities satisfied by $A$, $B$, and $p$ slightly less tight so for the sake of cleaner results we opted for incrementing only $e$.
\end{remark}

\begin{remark}
If parameters $A$ and $B$ are slightly more unbalanced (i.e., $B>rA^2\sqrt{p}$ for some $r>100$), then instead of increasing $e$ it is better to fix $e$ and increase $d$ by $A^2$ in each step. 
\end{remark}

\section{Backdoor instances} \label{sec:insecure}

In this section we give a method to specifically create instantiations of the SIDH framework for which we can solve~\ref{torsion} more efficiently.
So far all of our results were only considering cases where the starting curve $E_0$ has \jinvariant 1728. In Section~\ref{subsec:backdoor} we explore the question: For given $A,B$ can we construct starting curves for which we can solve~\ref{torsion} with a better balance? We will call such curves \emph{backdoor curves} (see Definition~\ref{insecure}), and quantify the number of backdoor curves in Section \ref{sec:numweak}.
In Sections~\ref{ss:Insecure p} and~\ref{subsec:param}, we also consider backdoored choices of $(p,A,B)$, for which we can solve \ref{torsion} more efficiently even when starting from the curve with \jinvariant~1728.

\subsection{Backdoor curves}\label{subsec:backdoor}

This section introduces the concept of \emph{backdoor curves} and how to find such curves. 
Roughly speaking,
these are specially crafted curves which, 
if used as starting curves for the SIDH protocol, 
are susceptible to our dual isogeny attack
by the party which chose the curve,
under only moderately unbalanced parameters $A,B$;
in particular, the imbalance is independent of $p$.
In fact, when we allow for non-polynomial time attacks
we get an asymptotic improvement over meet-in-the-middle
for balanced SIDH parameters
(but starting from a backdoor curve).
These curves could potentially be utilized as a backdoor,
for example by suggesting the use of such a curve as a standardized starting curve.
We note that
it does not seem obvious how backdoored curves,
such as those generated by Algorithm~\ref{alg:insecure},
can be detected by other parties:
The existence of an endomorphism of large degree which satisfies
Equation~\ref{the_eqnofdeath}
does not seem to be detectable without trying to recover such an endomorphism,
which is hard using all currently known algorithms.
The notion of backdoor curves is dependent on the parameters $A,B$, which motivates the following definition:

\begin{definition}\label{insecure}
Let $A,B$ be coprime positive integers. 
An \emph{$(A,B)$-backdoor curve} is a tuple $(E_0,\theta,d,e)$, where
$E_0$ is a supersingular elliptic curve  defined over some $\F_{p^2}$,
an endomorphism $\theta \in \End(E_0)$ in an efficient representation,
and two integers $d,e$ such that Algorithm~\ref{alg:find_isog} solves~\ref{torsion}
for that particular $E_0$ in time polynomial in $\log p$ when given $(\theta,d,e)$.
\end{definition}

The main result of this section is Algorithm \ref{alg:insecure} which computes $(A,B)$-backdoor curves in heuristic polynomial time,
assuming we have a factoring oracle (see Theorem~\ref{trapdoor}).

\begin{algorithm}
    \caption{\,Generating $(A,B)$-backdoor curves.} \label{alg:insecure}
    \vspace{.2ex}
    \Input{
        A prime $p\equiv3\pmod4$ and
        smooth coprime integers $A,B$ with $B>A^2$.
    }
    \Output{
        An $(A,B)$-backdoor curve $(E_0,\theta,d,e)$
        with $E_0/\F_{p^2}$.
    }
    \vspace{.4ex}
	Set $e:=1$. \label{insecure_e}
    \;
    \While{$\mathtt{true}$} {
        Find an integer $d$ such that $d^2\equiv B^2e \pmod {A^2}$\label{d}.
        \;
        \If {$d$ is coprime to $B$} {
            \If {$\frac{B^2e-d^2}{A^2}$ is square modulo $p$} {
                Find rational $a,b,c$ such that $pa^2+pb^2+c^2=\frac{B^2e-d^2}{A^2}$.\label{step:solve}
                \;
                \Break
            }
        }
        Set $e$ to the next square.
    }
    \vspace{.2ex}
    Set $\vartheta=a\ij+b\j+c\i\in B_{p,\infty}$.
    \;
	Compute a maximal order $\OO\subseteq \BB_{p,\infty}$ containing $\theta$.\label{step:maxorder}
    \;
	Compute an elliptic curve $E_0$ whose endomorphism ring is isomorphic to $\OO$.\label{step:elliptic}
    \;
    Construct an efficient representation of the endomorphism $\theta$ of $E_0$ corresponding to~$\vartheta$.\label{step:evalendo}
    \;
    \Return {$(E_0,\theta,d,e)$}.
\end{algorithm}

\begin{theorem}\label{trapdoor}
Given an oracle for factoring, if $A$ has (at most) $O(\log\log p)$ distinct prime factors, then
Algorithm~\ref{alg:insecure} can heuristically be expected to succeed in polynomial time.
\end{theorem}

\begin{remark}
The imbalance $B>A^2$ is naturally satisfied for a group key agreement in the style of \cite{DBLP:journals/iacr/AzarderakhshJJS19} with three or more participants; we can break (in polynomial time) such a variant when starting at an $(A,B)$-backdoor curve.
\end{remark}

\noindent
Before proving Theorem~\ref{trapdoor} we need the following easy lemma: 

\begin{lemma}
Let $p$ be a prime congruent to 3 modulo 4. Let $D$ be a positive integer. Then the quadratic form $Q(x_1,x_2,x_3,x_4)=px_1^2+px_2^2+x_3^2-Dx_4^2$ has a nontrivial integer root if and only if $D$ is a quadratic residue modulo $p$.
\label{quadratic}
\end{lemma}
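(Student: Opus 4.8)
The plan is to analyse the quadratic form $Q(x_1,x_2,x_3,x_4)=px_1^2+px_2^2+x_3^2-Dx_4^2$ via the Hasse--Minkowski local-global principle: a quadratic form in four or more variables over $\Q$ has a nontrivial rational zero if and only if it does so over every completion $\Q_v$ of $\Q$, and a nontrivial rational zero clears denominators to a nontrivial integer zero. So I would reduce the statement to checking isotropy of $Q$ over $\R$ and over each $\Q_\ell$ (including $\ell=p$ and $\ell=2$). Before that, it is harmless to assume $0<D<p$ and $D$ squarefree, since scaling $x_4$ by an integer changes $D$ by a square and does not affect either isotropy or the quadratic-residue condition mod $p$.

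The easy direction: if $Q$ has a nontrivial integer zero, look at it modulo $p$. Then $x_3^2\equiv Dx_4^2\pmod p$. If $p\nmid x_4$ this immediately gives that $D$ is a QR mod $p$. If $p\mid x_4$ then $p\mid x_3$; writing $x_3=px_3'$, $x_4=px_4'$ and dividing the original equation by $p$ one gets $x_1^2+x_2^2\equiv 0\pmod p$, and since $p\equiv 3\pmod 4$ the only solution is $p\mid x_1,x_2$ — so the whole vector is divisible by $p$, contradicting primitivity (after dividing out the largest common power of $p$ we may assume the zero is primitive). Hence $p\mid x_4$ is impossible and $D$ must be a QR mod $p$.

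For the converse I would run through the places. Over $\R$: since $D>0$, the form $px_1^2+px_2^2+x_3^2-Dx_4^2$ is indefinite, hence isotropic. Over $\Q_\ell$ for $\ell\notin\{2,p\}$ and $\ell\nmid D$: every nondegenerate quadratic form in $\geq 5$ variables over $\Q_\ell$ is isotropic, and in fact any nondegenerate form in $4$ variables over $\Q_\ell$ with $\ell$ odd and not dividing the determinant is isotropic; so these places are automatic. Over $\Q_\ell$ with $\ell$ odd dividing $D$: here the subform $px_1^2+px_2^2$ already contains a unit times a sum that becomes isotropic, or more cleanly, $x_3^2-Dx_4^2$ together with one more variable suffices — I would just note that a ternary form $x_3^2-Dx_4^2+px_1^2$ over $\Q_\ell$ with $\ell\mid D$, $\ell\nmid p$ represents $0$ by standard Hensel/symbol computations. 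Over $\Q_2$: a nondegenerate quadratic form in $\geq 4$ variables over $\Q_2$ need not be isotropic in general, but here I would exploit the extra structure — the subform $px_1^2+px_2^2+x_3^2$ is a scaled sum of three squares and, because $p\equiv 3\pmod 4$, represents everything in $\Q_2^\times$ that I need; concretely $x_1^2+x_2^2+x_3^2$ over $\Q_2$ represents all of $\Q_2$ except the square class of $-7$... I would instead argue directly that the $4$-variable form is isotropic over $\Q_2$ by a Hilbert-symbol computation, using $D<p$ and $p\equiv 3\pmod 4$.

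The main obstacle is the place $\ell=2$ (and to a lesser extent the places $\ell\mid D$): the Hasse--Minkowski criterion only buys isotropy for free at odd primes not dividing the determinant, so $2$ and the primes dividing $2D$ need an explicit Hilbert-symbol or Hensel's-lemma argument, and one must be careful that the hypothesis $p\equiv 3\pmod 4$ is exactly what makes these computations come out right (it is what kills the $x_1^2+x_2^2\equiv0$ case in the easy direction and what controls the $2$-adic symbols). An alternative that sidesteps some of this: realise $Q$ as (a twist of) the norm form of the quaternion algebra $\BB_{p,\infty}$ intersected with a trace-zero part — since $\j^2=-p$, $\i^2=-1$, the form $px_1^2+px_2^2+x_3^2$ is the norm form on the trace-zero lattice $\langle\i,\j,\ij\rangle$ of $\End(E_0)$ for $j(E_0)=1728$, and asking whether it represents $D$ is asking whether there is a trace-zero endomorphism of norm $D$; this is governed by the condition that $D$ be a norm from $\BB_{p,\infty}$, which (by the local description of $\BB_{p,\infty}$, ramified only at $p$ and $\infty$) reduces precisely to $D$ being a square modulo $p$ together with $D>0$. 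I would present whichever of these two routes is cleaner, probably the Hasse--Minkowski one with the $2$-adic case spelled out, and flag the quaternion-norm interpretation as the conceptual reason the statement holds.
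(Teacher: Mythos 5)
Your overall route is the same as the paper's: reduce to the Hasse--Minkowski local-global principle and then analyse isotropy of $Q$ place by place, with the key role of $p\equiv3\pmod4$ appearing in the mod-$p$ argument where $x_1^2+x_2^2\equiv0\pmod p$ forces a contradiction. The paper proves the ``only if'' direction by showing $Q$ has no $\Q_p$-point when $D$ is a non-residue; you instead argue directly with a primitive integer zero, but these are the same idea. The constructive direction (local solutions everywhere when $D$ is a QR mod $p$) is also what the paper sketches, citing Simon.

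There are, however, a couple of concrete slips. First, the opening reduction to ``$0<D<p$'' is false: scaling $x_4$ by a rational multiplies $D$ by a square, which lets you take $D$ squarefree, but it does \emph{not} let you reduce $D$ modulo $p$. You later lean on ``using $D<p$'' for the $2$-adic case, so this isn't cosmetic. Second, and more importantly, the $\Q_2$ case is never actually resolved in your write-up; you start two different arguments and abandon both. The clean way to finish it is the Hasse-invariant computation: $\epsilon_2(Q)=(p,p)_2=-1$ because $p\equiv3\pmod4$, whereas the unique anisotropic quaternary form over $\Q_2$ has $\epsilon_2=1$, so $Q$ is automatically isotropic over $\Q_2$ independently of $D$ (and in particular with no need for $D<p$). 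Third, the claim that the ternary subform $\langle1,-D,p\rangle$ is isotropic over $\Q_\ell$ for odd $\ell\mid D$ is not automatic (it amounts to a Hilbert-symbol condition that can fail); you should instead observe that the full four-variable form is isotropic there because either $v_\ell(D)$ is odd, in which case $\det(Q)\sim -D$ is a non-square in $\Q_\ell^\times$ and anisotropy is impossible, or $v_\ell(D)$ is even and you can strip the square to reduce to the unit-discriminant case. With those repairs, your proof closes; the quaternion-norm-form interpretation you mention at the end is indeed the conceptual reason for the statement (it says $D$ must be represented by the trace-zero norm form of $\BB_{p,\infty}$), and matches how the lemma is used in the paper.
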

\begin{proof}
The proof is essentially a special case of \cite[Proposition 10]{simon2005quadratic}, but we give a brief sketch of the proof here. If $D$ is a quadratic residue modulo $p$, then $px_1^2+px_2^2+x_3^2-Dx_4^2$ has a solution in $\mathbb{Q}_p$ by setting $x_1=x_2=0$ and $x_4=1$ and applying Hensel's lemma to the equation $x_3^2=D$. The quadratic form $Q$ also has local solutions everywhere else (the 2-adic case involves looking at the equation modulo 8 and applying a 2-adic version of Hensel's lemma). If on the other hand $D$ is not a quadratic residue modulo $p$, then one has to choose $x_3$ and $x_4$ to be divisible by $p$. Dividing the equation $Q(x_1,x_2,x_3,x_4)=0$ by $p$
and reducing modulo $p$ yields $x_1^2+x_2^2 \equiv 0\pmod p$.
This does not have a solution as $p\equiv3\pmod4$. Finally, one can show that this implies that $Q$ does not have a root in $\mathbb{Q}_p$.
\qed
\end{proof}

\begin{proof}[of Theorem~\ref{trapdoor}]
The main idea is to apply Theorem~\ref{more efficient} in the following way:
using Algorithm~\ref{alg:insecure},
we find integers $D$, $d$, and $e$, with $e$ polynomially small and $D$ a quadratic residue mod $p$, 
such that $A^2D+d^2=B^2e$,
and an element $\theta \in \BB_{p,\infty}$ of trace zero and such that $\theta^2 = -D$.
We then construct a maximal order $\OO \subseteq \BB_{p,\infty}$ containing $\theta$ 
and an elliptic curve $E_0$ with $\End(E_0) \cong \OO$.

Most steps of Algorithm~\ref{alg:insecure} obviously run in polynomial time, 
although some need further explanation. 
We expect $d^2 \approx A^4$ since we solved for $d$ modulo $B^2$,
and we expect $e$ to be small since heuristically we find a quadratic residue after a small number of tries.
Then the right-hand side in step \ref{step:solve} should be positive since $B > A^2$,
so by Lemma~\ref{quadratic},
step \ref{step:solve} returns a solution using Simon's algorithm~\cite{simon2005quadratic}, assuming an oracle for factoring $\frac{B^2e-d^2}{A^2}$. 
For step \ref{step:maxorder}, we can apply either of the polynomial-time algorithms \cite{ivanyos1993finding, voight2013identifying} for finding maximal orders containing a fixed order in a quaternion algebra,
which again assume a factoring oracle.
    Steps~\ref{step:elliptic} and~\ref{step:evalendo} can be accomplished using the heuristically polynomial-time algorithm from \cite{DBLP:journals/iacr/PetitL17,eisentrager2018supersingular} which returns both the curve~$E_0$ and (see~\cite[\S\,5.3, Algorithm~5]{eisentrager2018supersingular}) an efficient representation of~$\theta$.
\qed
\end{proof}

\begin{remark}
The algorithm uses factorization twice (once in solving the quadratic form and once in factoring the discriminant of the starting order).
\iffinal
In the full version~\cite[Appendix~C]{fullversion}
\else
In Appendix~\ref{imp}
\fi
we discuss how one can ensure in practice that the numbers to be factored have an easy factorization.
\end{remark}
\begin{remark}
Denis Simon's algorithm \cite{simon2005quadratic} is available on his webpage.%
\footnote{\url{https://simond.users.lmno.cnrs.fr/}}
Furthermore, it is implemented in MAGMA \cite{bosma1997magma} and PARI/GP \cite{batut2000user}.
The main contribution of Simon's paper is a polynomial-time algorithm for finding nontrivial zeroes of
(not necessarily diagonal)
quadratic forms
which 
does not rely on an effective version of Dirichlet's theorem. 
In our case, however, we only need a heuristic polynomial-time algorithm for finding a nontrivial zero $(x,y,z,u)$ of a form $px^2+py^2+z^2-Du^2$. 
We sketch an easy way to do this:
Suppose that $D$ is squarefree, and pick a prime $q\equiv1\pmod4$ such that $-pq$ is a quadratic residue modulo every prime divisor of $D$. 
It is then easy to see that the quadratic equations $px^2+py^2=pq$ and $Du^2-z^2=pq$ both admit a nontrivial rational solution which can be found using~\cite{cremona2003efficient}. 
\end{remark}

There are two natural questions that arise when looking at Theorem \ref{trapdoor}: 
\begin{itemize}
    \item Why are we using the dual attack and not the Frobenius attack?
    \item Why do we get a substantially better balance than we had before?
\end{itemize}

The answer to the first question is that we get a better result in terms of balance. In the Frobenius version we essentially get the same bound for backdoor curves as for the curve with \jinvariant 1728. The answer to the second question is that by not restricting ourselves to one starting curve we only have the condition that $pa^2+pb^2+c^2$ is an integer and $a,b,c$ can be rational numbers. 
\begin{remark}
Backdoor curves also have a constructive application: An improvement on 
the recent paper \cite{de2019seta} using Petit's attack to build a one-way function \enquote{S\'{E}TA}. 
In this scheme, the secret key is a secret isogeny to a curve $E_s$ that starts from the elliptic curve with \jinvariant 1728 and the message is the end point of
a secret isogeny from $E_s$ to some curve $E_m$,
together with the image of some torsion points.
The reason for using \jinvariant 1728 is in order to apply 
Petit's attack constructively. 
One could instead use a backdoor curve;
this provides more flexibility to the scheme as one does not need to disclose the starting curve and the corresponding norm equation is easier to solve.
\end{remark}

\subsection{Counting backdoor curves} \label{sec:numweak}

Having shown how to construct backdoor curves and how to exploit them,
a natural question to ask is how many of these curves we can find using the methods of the previous section.
Recall that the methods above search for an element $\vartheta \in \BB_{p,\infty}$ with reduced norm $D$.
Theorem~\ref{thm:onuki} below suggests
they can be expected to produce exponentially (in $\log D$) many different maximal orders,
and using Lemma~\ref{lemma:kaneko} we can prove this rigorously for the (indeed interesting) case of $(A,B)$-backdoor curves with $AB \approx p$ and $A^2 < B < A^3$ (cf.\ Theorem~\ref{trapdoor}).

We first recall some notation from~\cite{onuki2020oriented}.
The set $\rho(\Ell(\OO))$ consists of
the
reductions modulo $p$
of all elliptic curves over $\overline{\Q}$
with complex multiplication by~$\OO$.
Each curve $E=\mathcal E\bmod p$ in this set
comes with an optimal embedding
$\iota\colon\OO\hookrightarrow\End(E)$,
referred to as an \enquote{orientation} of~$E$,
and conversely,
\cite[Prop.~3.3]{onuki2020oriented}~%
shows that\,---\,%
up to conjugation\,---\,%
each oriented curve $(E,\iota)$
defined over~$\overline\F_p$
is obtained by the reduction modulo $p$ of a characteristic-zero curve;
in other words, either $(E,\iota)$ or $(E^{(p)},\iota^{(p)})$
lies in $\rho(\Ell(\OO))$.
The following theorem was to our knowledge first explicitly stated and used
constructively in~\cite{coloKohel} to build the \enquote{OSIDH} cryptosystem.
The proof was omitted,%
\footnote{%
    In \cite{coloKohel}
    the theorem was referred to as a classical result,
    considered to be folklore.
}
but later published by Onuki~\cite{onuki2020oriented},
whose formulation we reproduce here:

\begin{theorem}
    \label{thm:onuki}
    Let $K$ be an imaginary quadratic field such that $p$ does not split in $K$,
    and $\OO$ an order in $K$ such that $p$ does not divide the conductor of $\OO$.
    Then the ideal class group $\mathrm{cl}(\OO)$ acts freely and transitively on $\rho(\Ell(\OO))$.
\end{theorem}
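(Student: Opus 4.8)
The plan is to transport the classical main theorem of complex multiplication through reduction modulo~$p$. Write $K=\mathrm{Frac}(\OO)$ and, abusing notation, also write $\Ell(\OO)$ for the set of isomorphism classes of elliptic curves over $\overline{\Q}$ with complex multiplication by $\OO$; classical CM theory says $\cl(\OO)$ acts freely and transitively on this set via $\mathfrak a\star\mathcal E=\mathcal E/\mathcal E[\mathfrak a]$, and every class is represented by an integral $\OO$-ideal coprime to~$p$. Fix a prime $\mathfrak P$ of $\overline{\Q}$ above~$p$. First I would check that reduction at $\mathfrak P$ sends such an $\mathcal E$ (over a number field where it has good reduction at $\mathfrak P$) to a \emph{supersingular} curve $E/\overline{\F}_p$ --- this is Deuring's criterion, and is exactly where the hypothesis that $p$ does not split in $K$ is used --- and that the CM action $\OO\hookrightarrow\End(\mathcal E)$ reduces to an \emph{optimal} embedding $\iota\colon\OO\hookrightarrow\End(E)$, optimality at $p$ being where $p\nmid\mathrm{cond}(\OO)$ enters since it forces $\OO$ to be maximal at~$p$. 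This produces the reduction map $\rho$ with image $\rho(\Ell(\OO))$, each element carrying its orientation $\iota$.

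Next I would define the $\cl(\OO)$-action on oriented supersingular curves by $\mathfrak a\star(E,\iota)=(E/E[\iota\mathfrak a],\iota')$ with the pushed-forward orientation $\iota'$, check that it descends to ideal classes (a principal ideal $\alpha\OO$ acts trivially, because $\iota(\alpha)$ identifies the quotient with $E$ compatibly with the orientation) and is a group action, and then observe that $\rho$ is $\cl(\OO)$-equivariant: for $\mathfrak a$ coprime to~$p$ the isogeny realizing $\mathfrak a\star\mathcal E$ has étale kernel, which reduces bijectively, so $\rho(\mathfrak a\star\mathcal E)\cong\mathfrak a\star\rho(\mathcal E)$ as oriented curves. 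Transitivity of the action on $\rho(\Ell(\OO))$ is then immediate, since $\rho$ is surjective onto its image and $\cl(\OO)$ acts transitively on the source.

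It remains to prove \emph{freeness}, which I would deduce from injectivity of $\rho$: an equivariant surjection out of a $\cl(\OO)$-torsor that is also injective is an isomorphism of $\cl(\OO)$-sets, so the target inherits the free transitive action. Injectivity says that an oriented supersingular curve in $\rho(\Ell(\OO))$ has a unique CM lift along reduction at $\mathfrak P$, and the point is that the orientation is precisely what separates a CM curve $\mathcal E$ from its complex conjugate $\mathcal E^c$ after reduction: complex conjugation corresponds to $p$-power Frobenius on reductions, so $\rho(\mathcal E^c)$ is, up to isomorphism, the Frobenius conjugate $(E^{(p)},\iota^{(p)})$ of $\rho(\mathcal E)$, and this is a genuinely different oriented curve from $(E,\iota)$ unless $\pi_p$ commutes with $\iota(K)$, i.e.\ unless $K=\Q(\sqrt{-p})$. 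Combined with Onuki's companion lifting result \cite[Prop.~3.3]{onuki2020oriented} --- which supplies, up to this $p$-Frobenius ambiguity, the existence of a CM lift of any oriented supersingular curve, and whose deformation-theoretic proof also yields the uniqueness --- this pins down the fibres of $\rho$. The main obstacle is exactly this fibre analysis: ruling out that some nonprincipal ideal class carries $\mathcal E$ to $\mathcal E^c$ while $\rho$ collapses the pair, with a separate treatment of the degenerate case where $\mathcal E$ and $\mathcal E^c$ reduce to isomorphic unoriented curves, which forces one to exploit the explicit relationship between $p$-power Frobenius, complex conjugation, and the orientation. Everything else --- Deuring's criterion, optimality of the reduced embedding, and equivariance --- is routine once the definitions are set up.
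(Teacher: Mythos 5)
This theorem is not proven in the paper at all: it is imported verbatim from Onuki \cite[Theorem~3.4]{onuki2020oriented}, so the only meaningful comparison is with Onuki's argument, whose overall shape (simply transitive $\cl(\OO)$-action on $\Ell(\OO)$ in characteristic zero, Deuring reduction at a prime above $p$, equivariance of $\rho$ for ideals prime to $p$, transitivity on the image, and freeness $\Leftrightarrow$ injectivity of $\rho$ on oriented curves) your outline reproduces correctly. The problem is that the decisive step --- freeness --- is never actually proven in your write-up. You correctly identify it as ``the main obstacle'', but you then discharge it by asserting that the proof of \cite[Prop.~3.3]{onuki2020oriented} ``also yields the uniqueness'' of the CM lift; as stated (here and in Onuki), that proposition is an existence-up-to-Frobenius-conjugation statement and contains no uniqueness assertion, so this is an appeal to an unproven claim rather than an argument. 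Moreover, your fibre analysis implicitly assumes the only possible collisions are between a CM curve $\mathcal E$ and its complex conjugate $\mathcal E^c$; injectivity of $\rho$ requires excluding an oriented isomorphism $\rho(\mathcal E_1)\cong\rho(\mathcal E_2)$ for \emph{arbitrary} distinct classes $\mathcal E_1,\mathcal E_2\in\Ell(\OO)$ (and such unoriented collisions are abundant once $h(-D)$ exceeds the number of supersingular curves), and no justification for that reduction is given.

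A concrete way to close the gap --- essentially what Onuki does --- is the following. Suppose $[\mathfrak a]$ fixes $(E,\iota)=\rho(\mathcal E)$. By equivariance there is an oriented isomorphism $\lambda\colon\rho(\mathfrak a\star\mathcal E)\to(E,\iota)$; composing it with the reduction of the natural isogeny $\mathcal E\to\mathfrak a\star\mathcal E$ (whose reduction has kernel $E[\iota(\mathfrak a)]$ when $\mathfrak a$ is chosen prime to $p$) produces $g\in\End(E)$ of degree $\nor(\mathfrak a)$ commuting with $\iota(\OO)$. Since the commutant of $\iota(K)$ in $\BB_{p,\infty}$ is $\iota(K)$, and since the reduced orientation is \emph{primitive} (this optimality at $p$ is itself a fact that must be proven using $p\nmid\operatorname{cond}(\OO)$ --- you only assert it in one line), one gets $g=\iota(\gamma)$ with $\gamma\in\OO$, and comparing kernels forces $\mathfrak a=(\gamma)$, i.e.\ $[\mathfrak a]=1$. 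Note that your side claim that $(E,\iota)\not\cong(E^{(p)},\iota^{(p)})$ unless $K=\Q(\sqrt{-p})$ rests on exactly the same commutant-plus-primitivity argument. So the skeleton of your proposal is sound, but the heart of the theorem --- freeness, equivalently injectivity of $\rho$ as a map to \emph{oriented} curves --- is missing as written.
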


Thus, it follows from well-known results about
imaginary quadratic class numbers~\cite{siegel1935} that
asymptotically, there are $h(-D)\in\Omega(D^{\frac12-\varepsilon})$
many backdoor elliptic curves \emph{counted with
multiplicities} given by the number of embeddings of~$\OO$.
However, it is not generally clear that this corresponds
to many distinct \emph{curves}
(or maximal orders).
As an (extreme) indication of what could go wrong,
consider the following:
there seems to be no obvious reason why in some cases the
entire orbit of the group action of Theorem~\ref{thm:onuki}
should \emph{not} consist only of
one elliptic curve with lots of
independent copies of $\OO$ in its endomorphism ring.

We can however at least prove that this does not always happen.
In fact, in the case that $D$ is small enough relative to $p$,
one can show that there cannot be more than one embedding
of $\OO$
into any maximal order in $B_{p,\infty}$,
implying that the $h(-D)$ oriented supersingular elliptic curves indeed
must constitute $h(-D)\approx\sqrt D$ distinct quaternion maximal orders:

\begin{lemma}
    \label{lemma:kaneko}
    Let $\mathcal O$ be a maximal order in $\BB_{p,\infty}$.
    If $D\equiv 3,0\pmod4$ is a positive integer smaller than $p$,
    then there exists at most one copy
    of the imaginary quadratic order of discriminant $-D$
    inside $\mathcal O$.
\end{lemma}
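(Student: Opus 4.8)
The plan is to translate the statement into one about trace-zero quaternions, manufacture an order out of two hypothetical copies, and compare reduced discriminants with that of $\mathcal O$.

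\textbf{Step 1 (reduction).} Let $t_0\in\{0,1\}$ with $-D\equiv t_0\pmod 4$, so the order of discriminant $-D$ is $\mathbb Z[\omega]$ with $\mathrm{trd}(\omega)=t_0$ and $\mathrm{nrd}(\omega)=(D+t_0^2)/4$; a copy inside $\mathcal O$ is precisely an element $\omega\in\mathcal O$ with that trace and norm, and putting $\theta:=2\omega-t_0$ this is the same datum as a trace-zero $\theta\in\mathcal O$ with $\mathrm{nrd}(\theta)=D$. The two ring generators of such a copy are $\omega$ and $\bar\omega$, i.e.\ $\theta$ and $-\theta$, so two copies coincide exactly when the associated $\theta$'s agree up to sign (the finitely many tiny $D$ where $\mathbb Z[\omega]$ has extra automorphisms being checked by hand). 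Assume for contradiction that $\theta_1,\theta_2\in\mathcal O$ are trace-zero of reduced norm $D<p$ with $\theta_2\neq\pm\theta_1$. In the division algebra $\BB_{p,\infty}$ they cannot commute: otherwise $\theta_2\in\mathbb Q(\theta_1)$, and an element of that imaginary quadratic field with the same trace $0$ and norm $D$ as $\theta_1$ is forced to be $\pm\theta_1$. Hence $1,\theta_1,\theta_2,\theta_1\theta_2$ is a $\mathbb Q$-basis of $\BB_{p,\infty}$; writing $r:=\mathrm{trd}(\theta_1\theta_2)\in\mathbb Z$ and using $\theta_i^{\,2}=-D$ together with $\theta_2\theta_1=r-\theta_1\theta_2$, one checks that $\Lambda:=\mathbb Z+\mathbb Z\theta_1+\mathbb Z\theta_2+\mathbb Z\theta_1\theta_2$ is closed under multiplication, hence an order contained in $\mathcal O$.

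\textbf{Step 2 (discriminant).} In the basis $1,\theta_1\theta_2,\theta_1,\theta_2$ the Gram matrix of the trace pairing $(x,y)\mapsto\mathrm{trd}(x\bar y)$ is block diagonal with blocks $\left(\begin{smallmatrix}2&r\\ r&2D^2\end{smallmatrix}\right)$ and $\left(\begin{smallmatrix}2D&-r\\ -r&2D\end{smallmatrix}\right)$, so $\mathrm{disc}(\Lambda)=(4D^2-r^2)^2$ and the reduced discriminant of $\Lambda$ is $4D^2-r^2$. From $\mathrm{nrd}(\theta_1\theta_2)=D^2$ we get $r^2\le 4D^2$, and equality is the degenerate case already excluded, so $0<4D^2-r^2$ (equivalently $\mathrm{nrd}(\theta_1\mp\theta_2)=2D\pm r>0$, i.e.\ $|r|<2D$). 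Since $\Lambda\subseteq\mathcal O$ and $\mathcal O$ is a maximal order, whose reduced discriminant is $p$, we obtain $[\mathcal O:\Lambda]=(4D^2-r^2)/p$; in particular $p\mid 4D^2-r^2=(2D-r)(2D+r)$, with both factors lying in $(0,4D)\subseteq(0,4p)$.

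\textbf{Step 3 (conclusion, and where the difficulty lies).} The divisibility together with these size bounds already handles a range of $D$ (for instance $D<p/4$ is immediate). The hard part is to push the contradiction to the full hypothesis $D<p$, and here I would work locally at $p$: in $\mathcal O\otimes\mathbb Z_p$, the unique maximal order of the local division algebra $\BB_{p,\infty}\otimes\mathbb Q_p$, each $\theta_i$ can be written $c_i\delta+b_i\Pi$ with $\Pi^2=p$, $\delta$ a fixed trace-zero unit, and $c_i\in\mathbb Z_p^{\times}$ because $\mathrm{nrd}(\theta_i)=D$ is a $p$-adic unit; then $c_1^2\equiv c_2^2\pmod p$, which pins down which of $2D\mp r$ is divisible by $p$ and to what valuation, and feeding these local constraints back into the integrality of $[\mathcal O:\Lambda]=(4D^2-r^2)/p$ and the global bound $4D^2-r^2<4p^2$ is what should produce the contradiction. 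This local-to-global bookkeeping (essentially Kaneko's argument) is the substantive part of the proof: everything before it is routine, and it is the only place the sharp hypothesis $D<p$ is needed, rather than the weaker $D=O(\sqrt p)$ that comes out of the crude discriminant estimate alone.
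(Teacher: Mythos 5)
Your proposal is an attempt to reprove the lemma from scratch; the paper's own proof is a one-line citation to Kaneko's Theorem~2\textbf', so comparing the \emph{approach} is really comparing you to Kaneko, not to the paper.

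There are two genuine problems.

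\textbf{Step 1 is not an equivalence.} You claim that a copy of the discriminant-$(-D)$ order inside $\mathcal O$ ``is the same datum'' as a trace-zero $\theta\in\mathcal O$ of reduced norm $D$. The forward direction is fine ($\omega\in\mathcal O$ gives $\theta=2\omega-t_0\in\mathcal O$), but the converse requires $\omega=(\theta+t_0)/2\in\mathcal O$, which is an extra integrality condition at the prime~$2$ that you have silently dropped. This is not cosmetic: with the constraint dropped, the statement you are trying to prove is \emph{false}. Take $p=11$, $\mathcal O=\End(E_{1728})$, $D=3$. The trace-zero norm form on $\mathcal O$ is $b^2+bc+3c^2+11e^2$, and $\theta_1=\frac{\i+\j}{2}$, $\theta_2=\frac{-\i+\j}{2}$ both have reduced norm $3<p$ and satisfy $\theta_2\neq\pm\theta_1$. (Indeed $r=\mathrm{trd}(\theta_1\theta_2)=-5$, so $4D^2-r^2=11=p$ and $\Lambda=\mathcal O$; the global discriminant bound is tight and gives no contradiction, as it must not.) The lemma is still true here because neither $\frac{1+\theta_i}{2}$ lies in $\mathcal O$, so neither $\theta_i$ actually corresponds to an embedded copy of the order of discriminant $-3$. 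Any completed proof must therefore carry the mod-$2\mathcal O$ condition on $\theta$ through the argument; a purely local-at-$p$ analysis cannot recover it.

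\textbf{Step 3 is a gap, and you say so.} The global argument $[\mathcal O:\Lambda]=(4D^2-r^2)/p\in\mathbb Z_{>0}$ yields only $4D^2-r^2\geq p$, i.e.\ $D\gtrsim\sqrt p/2$; even the sharper order $\mathbb Z[\omega_1,\omega_2]$ (reduced discriminant $(4D^2-r^2)/16$) only improves the constant. You correctly identify that the remaining local-to-global bookkeeping at $p$ ``is the substantive part of the proof,'' but you sketch a direction rather than carry it out, and as noted above the sketch is at the wrong prime to recover the dropped Step~1 hypothesis. In short: the routine part of your write-up is correct, but the part you label as ``essentially Kaneko's argument'' is exactly the content of the lemma and is not supplied.
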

\begin{proof}
    This follows readily from Theorem~2$'$ of \cite{kaneko1989supersingular}.
\end{proof}

\noindent
This lemma together with Theorem~\ref{trapdoor} shows that there are
$\Theta(h(-D))$ many $(A,B)$-backdoor maximal orders under the restrictions that
$B > A^2$ and $D < p$.
Consider the case (of interest) in which $AB \approx p$: 
Following the same line of reasoning as in the proof of Theorem~\ref{trapdoor}
we have that 
$B^2/A^2 - A^2  \approx D$,
which if $D < p \approx AB$ implies that $B \lessapprox A^3$.
Hence, as advertised above, Lemma~\ref{lemma:kaneko} suffices to prove that there are
$\Theta(h(-D))$ many $(A,B)$-backdoor maximal orders under the restriction
that $AB \approx p$ and roughly $A^2<B<A^3$.
For larger choices of $\Ntwo$, it is no longer true that there is
only one embedding of $\OO$ into a quaternion maximal order:
indeed, at some point $h(-D)$ will exceed
the number~$\Theta(p)$ of available maximal orders,
hence there must be repetitions.
While it seems hard to imagine cases where the orbit of
$\cl(\Z[\theta])$ covers only a negligible number of curves
(recall that $\theta$ was our endomorphism of reduced norm $D$),
we do not currently know how
(and under which conditions)
to rule out this possibility.

\begin{remark}
Having obtained any one maximal order
$\mathfrak O$
that contains $\theta$,
it is efficient to compute more such orders
(either randomly or exhaustively):
For any ideal $\mathfrak a$ in~$\Z[\theta]$,
another maximal order with an optimal embedding of $\Z[\theta]$
is the right order of the left ideal $\mathcal I=\mathfrak O\mathfrak a$.
(One way to see this: $\mathfrak a$ defines a horizontal isogeny
with respect to the subring~$\OO$; multiplying by the full endomorphism
ring does not change the represented kernel subgroup; the codomain of
an isogeny described by a quaternion left ideal has endomorphism ring
isomorphic to the right order of that ideal. Note that this is similar
to a technique used by~\cite{rational-irrational} in the
context $\OO\subseteq\Q(\pi)$.)
\end{remark}

\subsection{Backdoored \texorpdfstring{$\bm{p}$}{TEXT} for given 
\texorpdfstring{$\bm{A}$}{TEXT} and \texorpdfstring{$\bm{B}$}{TEXT} with starting vertex \texorpdfstring{$\bm{j=1728}$}{TEXT}}
\label{ss:Insecure p}

Another way of constructing backdoor instances of an SIDH-style key exchange is to keep the starting vertex as $j=1728$ (or close to it),
keep $A$ and $B$ smooth or powersmooth
(but not necessarily only powers of $2$ and $3$ as in SIKE),
and construct the base-field prime $p$ to turn $j=1728$ into an $(A,B)$\nobreakdash-backdoor curve.
In this section, let $E_0$
denote the curve $E_0\colon\, y^2=x^3+x$.

An easy way of constructing such a $p$
is to perform steps~\ref{insecure_e} and~\ref{d}
of Algorithm~\ref{alg:insecure},
and then let $D := \frac{B^2e - d^2}{A^2}$.  
Then we can solve
\[
D = p(a^2+b^2) + c^2
\]
in variables $a,b,c,p \in \mathbb{Z}$, $p$ prime, as follows.
Factor $D-c^2$ for small $c$ until the result is of the form $pm$ where $p$ is a large prime congruent to $3$ modulo $4$ and~$m$ is a number representable as a sum of squares.\footnote{Some choices of $A$ and $B$ result in $D \equiv 2 \pmod 4$ which is an obstruction to this method.}

Then, with $\theta = a\iota\pi + b\pi + c\iota$
the tuple $(E_0,\theta,d,e)$ is $(A,B)$-backdoor.
(Note that, in this construction, we cannot expect to satisfy a relationship such as $p = ABf-1$ with small $f \in \Z$.)

As an (unbalanced) example, let us choose $A=2^{216}$ and $B=3^{300}$ and set $e=1$. Then we can use $d=B\bmod A^2$. Let $D=\frac{B^2-d^2}{A^2}$, for which we will now produce two primes:
First, pick $c=53$, then $D-c^2$ is a prime number (i.e., $a=1,~b=0$).
Second, pick $c=355$, then $D-c^2$ is 5 times a prime number (i.e., $a=2$, $b=1$).
Both of these primes are congruent to $3$ modulo $4$.

For a powersmooth example, let $A$ be the product of every other prime from $3$ up through $317$, and let $B$ be the product of all remaining odd primes $\le 479$.  With $e=4$, we can again use $d=B\bmod A^2$ and compute $D$ as above. Then $D - 153^2$ is prime and congruent to~$3$ modulo~$4$ (i.e., $a=1$, $b=0$).

\subsection{Backdoored $p$ for given \texorpdfstring{\bm{$A\approx B$}}{TEXT} with starting vertex \texorpdfstring{\bm{$j=1728$}}{TEXT}}\label{subsec:param}
For $A \approx B$, finding $(A,B)$-backdoor curves seems difficult. However, in this section we show that
certain choices of (power)smooth parameters $A$ and $B$ allow us to find $f$ such that $j=1728$ can be made insecure over any $\F_{p^2}$ with $p = ABf-1$.

One approach to this is to find Pythagorean triples $A^2 + d^2 = B^2$ where $A$ and $B$ are coprime and (power)smooth; then $E_0\colon y^2=x^3+x$ is a backdoor curve with $\theta=\iota$, the $d$ value from the Pythagorean triple, and $e=1$.  With this construction, we can then use \emph{any} $p\equiv3\pmod4$, in particular one of the form $p = ABf-1$. 

Note that given the isogeny degrees $A,B$, it is easy for anyone to detect if this method has been used by simply checking whether $B^2-A^2$ is a square; hence, an SIDH key exchange using such degrees is simply \emph{weak} and not just backdoored.%
\footnote{%
    We resist the temptation of referring to such instantiations as \enquote{door} instead of \enquote{backdoor}.
}

\begin{problem}
Find Pythagorean triples $B^2 = A^2 + d^2$ such that $A$ and $B$ are coprime and smooth (or powersmooth).
\end{problem}

Pythagorean triples can be parameterized in terms of Gaussian integers.  To be precise, primitive integral Pythagorean triples $a^2=b^2 + c^2$ are in bijection with Gaussian integers $z = m+ni$ with $\gcd(m,n)=1$ via the correspondence $(a,b,c) = \big({\nor(z)}, \real(z^2), \ima(z^2)\big)$.  The condition that $m$ and $n$ are coprime is satisfied if we take $z$ to be a product of split Gaussian primes, i.e., $z = \prod_i w_i$ where $\nor(w)~\equiv~1~\pmod 4$ is prime, taking care to avoid simultaneously including a prime and its conjugate.  Thus the following method applies provided that $B$ is taken to be an integer divisible only by primes congruent to $1$ modulo $4$, and $B > A$.

In order to guarantee that $B = \nor(z)$ is powersmooth, one may take many small $w_i$.  In order to guarantee that $B$ is smooth, it is convenient to take $z = w^k$ for a single small Gaussian prime $w$, and a large composite power $k$.

It so happens that the sequence of polynomials $\real(z^k)$ in variables $n$ and $m$ (recall $z=n+mi$) factors generically into relatively small factors for composite $k$, so that, when $B^2 = A^2 + d^2$, we can expect that $A$ is frequently smooth or powersmooth.
In practice, running a simple search using this method, one very readily obtains example insecure parameters:
\begin{align*}
    B &= 5^{105} \\
    A &= 2^2 \cdot 11 \cdot 19 \cdot 29 \cdot 41 \cdot 59 \cdot 61 \cdot 139 \cdot 241 \cdot 281 \cdot 419 \cdot 421 \cdot 839 \cdot 2381 \cdot 17921 \\
    &\hspace{2em}\cdot 21001 \cdot 39761 \cdot 74761\cdot 448139 \cdot 526679 \cdot 771961 \cdot 238197121 \\
    d &= 3^2 \cdot 13 \cdot 79 \cdot 83 \cdot 239 \cdot 307 \cdot 2801 \cdot 3119 \cdot 3361 \cdot 3529 \cdot 28559 \cdot 36791 \cdot 53759 \\
    &\hspace{2em}\cdot 908321 \cdot 3575762705759 \cdot 23030958433523039 
\end{align*}

For this example, if we take $p = 105 AB - 1$, we obtain a prime
which is $3$ modulo $4$. 
Note that here $B \approx 2^{244}$ and $A \approx 2^{238}$.
Many other primes can easily be obtained (replacing $105$ with $214$, $222$, etc).

\begin{remark}
    When choosing parameter sets to run B-SIDH~\cite{DBLP:conf/asiacrypt/Costello20}, 
    if the user is very unlucky, 
    they could hit an instance of such a weak prime. 
    With this in mind, it would be prudent to check that a given combination of $A$, $B$, and $p$ does not fall into this category before implementing such a B-SIDH instance.
\end{remark}

\section{Non-polynomial-time attacks}\label{sec:non-poly}

So far we focused on polynomial-time algorithms both for the starting curve $E_0$ with \jinvariant 1728 and for backdoor curves,
which required the integer $e$ occuring in the attack to be polynomial in $\log p$.
However, the attack still works when $e$ is bigger, with decent scaling behaviour.
Hence, we may (and will in this section) consider algorithms which are exponential-time, yet improve on the state of the art.
The best known classical and quantum attacks for retrieving an isogeny of degree $A$ take time $\Ostar(A^{\frac12})$;
recall that we discussed quantum claw-finding in Subsection~\ref{sec:prel:quantumcost}.
We will adapt both the dual and the Frobenius isogeny attacks of Section~\ref{sec:improvements} 
to allow for some brute-force in order to attack balanced parameters.
We will also adapt the definition of backdoor curves 
to include curves for which there exists an exponential dual isogeny attack that improves on the state of the art, thus increasing the pool of backdoor curves.

\subsection{Non-polynomial time dual isogeny attack for $E_0: y^2=x^3+x$}\label{subsec:nonpoly}

Recall from Section~\ref{sec:improvements} that the dual isogeny attack consists of a
\enquote{precomputation} phase and a \enquote{key-dependent} phase.
The precomputation phase
(Algorithm~\ref{alg:death-eqn})
was to find a solution to Equation~\eqref{the_eqnofdeath}\,---\,notably, this depends only on the parameters $(p,\None,\Ntwo)$ and not on the concrete public key under attack.
The \enquote{key-dependent} phase
utilized said solution to recover the secret isogeny
via Theorem~\ref{more efficient}
for a specific public key.
Our modifications to the dual isogeny attack
come in three independent guises,
and the resulting algorithm is shown in Algorithm~\ref{alg:hybrid}:

\begin{itemize}
    \item \ul{Precomputation phase}: %
        \vspace{.2ex}
		\begin{itemize}
			\item \textbf{Larger $\bm{d}$:}
    			When computing a solution to Equation~\eqref{the_eqnofdeath},
    			we fix $e$ and then try up to 
    			$A^{\delta}$ values for $d$ 
    			until the equation has solutions. 
    			This allows us to further relax the constraints between 
    			$A$, $B$, and $p$, 
    			at the price of an exhaustive search of classical cost 
                $\Ostar(A^{\delta})$
                or quantum cost $\Ostar(A^{\frac{\delta}{2}})$
                using Grover's algorithm.
		\end{itemize}
    \vspace{.7ex}
	\item \ul{Key-dependent phase}: %
        \vspace{.2ex}
		\begin{itemize}
    		\item \textbf{Larger $\bm{e}$:}
    			We search for a solution to Equation~\eqref{the_eqnofdeath} 
    			where $e$ is any smooth number $\leq A^\epsilon$ 
    			with $\epsilon\in[0,1]$,
        		whereas in~\cite{Petit2017}
        		the integer
        		$e$ was required to be polynomial in $\log p$.
				This relaxes the constraints on $A$ and $B$, 
				at a cost of a 
				$\Ostar(e^{\frac{1}{2}}) = \Ostar(A^{\frac{\epsilon}{2}})$ 
				computation, both classically and quantumly, via 
				a meet-in-the-middle or claw-finding algorithm 
				(to retrieve the endomorphism 
				$\eta$ defined in the proof of
				Theorem~\ref{more efficient}).
    		\item \textbf{Smaller $\bm{A}$:}
    			We first naïvely guess part of the secret isogeny and
    			then apply the dual isogeny attack
    			only on the remaining part for each guess; see Figure~\ref{fig:smallerA}.
    			More precisely,
			    we iterate through isogenies of degree 
    			$A^\gamma \mid A$, 
    			with $\gamma \in [0,1]$, 
    			and for each possible guess we apply the dual isogeny attack to~\ref{torsion} with 
    			$A':= A^{1-\gamma}$ 
    			in place of $A$.
                The Diophantine equation to solve thus turns into
                \begin{equation}
                    \label{eqn:generalized_eqnofdeath}
                    \None'^2(p a^2 + p b^2 + c^2 )+ d^2 =  \Ntwo^2 e
                    \,\text.
                \end{equation}
                The cost of using $A'$ in place of $A$ is the cost of iterating over 
                the isogenies of degree $A^{\gamma}$
                multiplied by the cost $T$ of running the dual isogeny attack (possibly adapted as above to allow for larger $e$).
                This is an exhaustive search of cost $\Ostar(A^{\gamma}\cdot T) = \Ostar(A^{\gamma + \frac{\epsilon}{2}})$ classically
                or $\Ostar(A^{\frac{\gamma}{2}}\cdot T) = \Ostar(A^{\frac{\gamma+\epsilon}{2}})$ quantumly using Grover's algorithm.\footnote{
                For the reader who is wondering exactly how to apply Grover's algorithm in this context:
                Let $\langle P_A, Q_A \rangle = E_0[A^{\gamma}].$ 
                The input for Grover's algorithm here is an integer $n < A^{\gamma}$ and all of the input of Algorithm~\ref{alg:find_isog}.
                Attempt Steps~\ref{step2} and \ref{step3} for $\varphi_g$ such that $\ker(\varphi_g) = \langle P_A + nQ_A\rangle $;
                the output will be success or failure.
                Every subroutine of Steps~\ref{step2} and \ref{step3} can be broken down into basic elliptic curve arithmetic for which there are known quantum algorithms of similar complexity to their classical counterparts.
                }
                
		\end{itemize}
        \vspace{-2ex}
\end{itemize}

\begin{center}
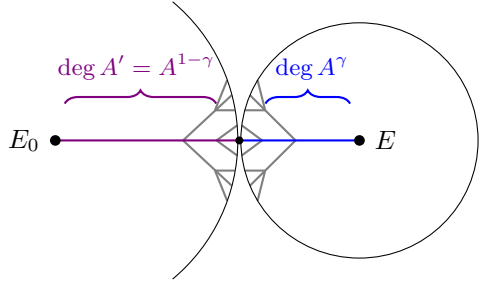
\begin{figure}
\hspace{4 cm}
    \begin{tikzpicture}[scale=0.4 ]
    \node (t0) at (0,1) {};
    \node (t1) at  (5.6,1) {};
\draw[decorate,decoration={brace, raise=2.5pt,amplitude=5pt}, thick,violet] (t0) -- node[above=8pt] 
{\begin{footnotesize}$\deg A'=A^{1-\gamma}\, \,$\end{footnotesize}} (t1);
    \node (t2) at (6.6,1) {};
    \node (t3) at  (10,1) {};
\draw[decorate,decoration={brace, raise=2.5pt,amplitude=5pt}, thick,blue] (t2) -- node[above=8pt] 
{\begin{footnotesize}$\,\, \deg A^{\gamma}$\end{footnotesize}} (t3);
        \draw[thick, color=gray] (5.3,0) -- (5.98,.5);
        \draw[thick, color=gray] (5.3,0) -- (5.98,-.5);
        \draw[thick, color=gray] (4.2,0) -- (5.8,-1.5);
        \draw[thick, color=gray] (5.25,-1) -- (5.9,-1);     
        \draw[thick, color=gray] (5.25,-1) -- (5.65,-2);
        \draw[thick, color=gray] (4.2,0) -- (5.8,1.5);
        \draw[thick, color=gray] (5.25,1) -- (5.9,1);     
        \draw[thick, color=gray] (5.25,1) -- (5.65,2); 
        \draw (6,0) arc (0:-50:6);
        \draw (6,0) arc (0:50:6);
        \draw[thick, color=gray] (6.8,0) -- (6.13,.5);
        \draw[thick, color=gray] (6.8,0) -- (6.13,-.5);
        \draw[thick, color=gray] (7.9,0) -- (6.4,-1.5);
        \draw[thick, color=gray] (6.9,-1) -- (6.23,-1);     
        \draw[thick, color=gray] (6.65,-2) -- (6.9,-1);
        \draw[thick, color=gray] (7.9,0) -- (6.4,1.5);
        \draw[thick, color=gray] (6.9,1) -- (6.23,1);     
        \draw[thick, color=gray] (6.65,2) -- (6.9,1);        
        \draw[thick, color=blue] (10,0) -- (6.05,0);
        \draw[thick, color=violet] (0,0) -- (6.05,0) node[right] {{}};
        \node[circle,fill=black,inner sep=0pt,minimum size=4pt,label=left:{$E_0$}] (C1) at (0,0) {};
        \node[circle,fill=black,inner sep=0pt,minimum size=4pt,label=right:{$E$}] (C2) at (10,0) {};  
        \node[circle,fill=black,inner sep=0pt,minimum size=3pt,label=above:{}] (A) at (6.05,0) {};     
        \draw (C2) circle (3.9);
        \end{tikzpicture}
\caption{Brute-force guessing the degree $A^{\gamma}$ part of Alice's isogeny $\phi$ from Alice's curve $E$ and the dual isogeny attack to find the remaining degree
$A'$ part of $\phi$ from $E_0$.}
\label{fig:smallerA}
\end{figure}
\end{center}

\begin{algorithm}
    \caption{\,Solving the norm equation; precomputation.} \label{alg:solve_eqn}
    \vspace{.2ex}
    \Input {%
            \mybullet{} SIDH parameters $p,A=p^\alpha,B=p^\beta$.
            \InputNewLine
            \mybullet{} Attack parameters $\delta,\gamma,\epsilon\in[0,1]$, with $A^\gamma\mid A$.
        }
    \Output {%
        A solution $(a,b,c,d,e)$
        to \eqref{eqn:generalized_eqnofdeath} with $A'=A^{1-\gamma}$
        and $e\leq A^\epsilon$ smooth.
        }
    %
    %
    \vspace{.4ex}
    \label{step:start}%
           Pick a smooth number $e\leq A^\epsilon$ which is a square modulo $A'^2$.
           \;
           Compute $d_0$ such that $d_0^2\equiv eB^2\pmod{A'^2}$.
           \;
    \For{$d'=1,2,...,\lfloor A^\delta\rfloor$ such that $d_0+A'^2d' < \sqrt e B$} {
        \vspace{.2ex}
	    Let $d=d_0+A'^2d'$.
        \;\label{step:find-c}
        Find the smallest positive integer $c$ such that $c^2A'^2=eB^2-d^2\pmod{p}$,
            or~\textbf{continue}~if~no~such~$c$~exists.
        \;
        \vspace{.4ex}
        \label{step:secondif}
        \If{$eB^2>d^2+c^2A'^2$} {
               \label{step:endeqdeath}
               \label{step:cornacchia}
               Try finding $(a,b)$ such that $a^2+b^2=\frac{eB^2-d^2-c^2A'^2}{A'^2p}$.
               If~a~solution~is~found,~\textbf{return}~$(a,b,c,d,e)$.
           }
       }
\end{algorithm}

\begin{algorithm}
    \caption{\,Recovering the secret isogeny; key-dependent phase.} \label{alg:find_isog}
    \vspace{.2ex}
    \Input {%
            \mybullet{} All the inputs of Algorithm~\ref{alg:solve_eqn}.
            \InputNewLine
            \mybullet{} An instance of~\ref{torsion} with those parameters, namely a curve $E$ and points $P,Q\in E[B]$ where there exists a degree-$A$ isogeny $\varphi:E_0\rightarrow E$ such that $P,Q$ are the images by $\varphi$ of a canonical basis of $E_0[B]$.
            \InputNewLine
            \mybullet{} $\theta\in\End(E_0)$ and $d,e\in\Z$ such that $\deg(A'\theta+d)=B^2e$ with $e\leq A^{\epsilon}$ smooth.
        }
    \Output {An isogeny $\varphi$ matching the constraints given by the input.}
    \vspace{.7ex}
    \label{step:guessisog}
    \For{$\varphi_g:E\rightarrow E'$ an $A^\gamma$-isogeny} {
        \vspace{.4ex}
	    Compute $P'=[A^{-\gamma}\bmod{B}]\,\varphi_{g}(P)$ and $Q'=[A^{-\gamma}\bmod{B}]\,\varphi_{g}(Q)$.
        \label{step2}\;
        \vspace{.2ex}
	    Use Theorem~\ref{more efficient} to compute $\varphi':E_0\rightarrow E'$ of degree $A' = A^{1-\gamma}$,
	    assuming~that~$P'$~and~$Q'$~are~the~images~by~$\varphi'$
	    of~the~canonical~basis~of~$E_0[B]$,
	    or~conclude~that~no~such~isogeny~exists.
        \label{step3}\;
        \vspace{.4ex}
        \If{$\varphi'$ is found}{
            \vspace{.2ex}
            \Return {$\varphi=\dual{\varphi_g}\circ\varphi'$.} \label{step:part2end}
        }
    }
\end{algorithm}

\begin{algorithm}
    \caption{\,Solving~\ref{torsion}.} \label{alg:hybrid}
    \vspace{.4ex}
        Invoke Algorithm~\ref{alg:solve_eqn},
        yielding $a,b,c,d,e\in\Z$,
        and then Algorithm~\ref{alg:find_isog}
        with $\theta=a\iota\pi+b\pi+c\iota$.
\end{algorithm}

\begin{proposition}\label{prop:nonpolydual}
Define $\alpha$ and $\beta$ by setting $A = p^{\alpha}$ and $B = p^{\beta}$ and fix $0<\alpha\leq\beta$.
Under Heuristic~\ref{heur:deathattack-kate}, 
if 
    \[
        2\beta+\alpha\epsilon
        \hspace{.4em} \geq \hspace{.4em}
        \max\,\{4\alpha+2\alpha\delta-4\alpha\gamma,\ 2+2\alpha-2\alpha\delta-2\alpha\gamma\}
        \,\text,
    \]
$A$ has (at most) $O(\log\log p)$ distinct prime factors, and $B$ is at most polynomial in $A$, 
then Algorithm~\ref{alg:hybrid} solves~\ref{torsion} in time $\Ostar(A^{\Gamma})$
on a classical computer and time $\Ostar(A^{\Gamma/2})$ on a quantum computer, where
    \[
        \Gamma
        \hspace{.2em}
        :=
        \hspace{.2em}
        \max\Big\{ \frac{1+3\alpha-2\beta}{3\alpha},\, \frac{2\alpha-\beta}{2\alpha},\, \frac{1+\alpha-\beta}{2\alpha} \Big\}
        \,\text.
    \]
    \CP{and negligible memory cost}
\end{proposition}

\begin{proof}
    \iffinal
    See the full version~\cite[Appendix~A.2]{fullversion}.
    \else
    See Appendix~\ref{proof-nonpolydual}.
    \fi
\qed
\end{proof}

\begin{corollary}\label{cor:nonpolydual}
Suppose that $B$ is at most polynomial in $A$ and that $A$ has (at most) $O(\log\log p)$ distinct prime factors.
When run on a classical computer, 
Algorithm~\ref{alg:hybrid} is asymptotically more efficient than meet-in-the-middle\,---\,disregarding memory concerns, so more efficient than $\Ostar(A^\frac{1}{2})$\,---\,whenever 
\[B > \max\left\{
\sqrt{p}A^{\frac{3}{4}}, A, p
\right\}.\]
When run on a quantum computer,
Algorithm~\ref{alg:hybrid} is asymptotically more efficient than quantum claw-funding\,---\,according to the model in~\cite{DBLP:conf/crypto/JaquesS19}, so more efficient than $\Ostar(A^{\frac{1}{2}})$\,---\,whenever 
\[B > \max\left\{
\sqrt{p}, A^{-1}p
\right\}.\]
\end{corollary}

\subsection{Non-polynomial time Frobenius isogeny attack for $E_0: y^2 = x^3 + x$}\label{sec:nonpolyfrob}

Recall the Frobenius isogeny attack from Section~\ref{sec:improvements}.
In a similar way to the previous section, we allow for some brute-force to improve the balance of our parameters.
More precisely, we consider again:
\begin{itemize}
    \item[$\bullet$] \textbf{Smaller $A$:} Iterate through isogenies of degree $A^{\gamma} |A$; 
    in the precomputation we solve instead
    \begin{equation}\label{eqn_ofevenlessdeath}
        A'^2(a^2 + b^2) + pc^2 = B^2e,
    \end{equation}
    where $A' = A^{1-\gamma}$.
\end{itemize}
\noindent
Algorithm~\ref{alg:nonpoly_frob_new} describes how to adapt the Frobenius isogeny attack of Section~\ref{sec:improvements} in this way.
\vspace{2ex}

\begin{algorithm} [H] 
    \caption{\,Solving~\ref{torsion}.} \label{alg:nonpoly_frob_new}
    \vspace{.4ex}
        (Precomputation) Invoke Algorithm~\ref{alg:frob-eqn} with inputs $p,A',B$,
        yielding $a,b,c,e\in\Z$.\\
        (Key-dependent) Run Algorithm~\ref{alg:find_isog}
        except that $\theta=a\iota\pi+b\pi \in\End(E_0)$ instead satisfies the equation $\deg(A'\theta+c)=B^2ep$ and we use Theorem~\ref{frob-thm-kate} in place of
        Theorem~\ref{more efficient}.
\end{algorithm}

\begin{proposition}\label{prop:nonpolyfrob}
Define $\alpha$ and $\beta$ by $A = p^{\alpha}$ and $B = p^{\beta}$, fix $B \geq A$ and $B$ at most polynomial in $A$, and suppose that $A' = A^{1-\gamma}$ has (at most) $O(\log\log p)$ distinct prime factors.
Under Heuristic~\ref{heur:deathattack-kate2},
Algorithm~\ref{alg:nonpoly_frob_new} has complexity
 $\Ostar\left(A^{\gamma}\right) = \Ostar\left( A^{ \frac{1+4\alpha -2\beta}{4\alpha}}\right)$
    classically and $\Ostar\left(A^{\frac{\gamma}{2}}\right) = \Ostar\left( A^{ \frac{1+4\alpha -2\beta}{8\alpha}}\right)$ quantumly.
    Moreover, the precomputation step runs in time $\Ostar(1)$.
\end{proposition}

\begin{proof}
    \iffinal
    See the full version~\cite[Appendix~A.3]{fullversion}.
    \else
    See Appendix~\ref{proof-nonpolyfrob}.
    \fi
\end{proof}

\begin{corollary}\label{cor:nonpolyfrob}
Suppose that $B$ is at most polynomial in $A$ and that $A$ has (at most) $O(\log\log p)$ distinct prime factors.
When run on a classical computer, 
Algorithm~\ref{alg:nonpoly_frob_new} is asymptotically more efficient than meet-in-the-middle\,---\,disregarding memory concerns, 
so more efficient than $\Ostar(A^\frac{1}{2})$\,---\,whenever $B > \sqrt{p}A$.
When run on a quantum computer,
Algorithm~\ref{alg:nonpoly_frob_new} is asymptotically more efficient than quantum claw-funding\,---\,according to the model in~\cite{DBLP:conf/crypto/JaquesS19}, so more efficient than $\Ostar(A^{\frac{1}{2}})$\,---\,whenever $B > \sqrt{p}$.
\end{corollary}

\begin{remark}
It may seem natural to also allow for larger $e$ as in the dual isogeny attack. However, this limits how small $A'$ can be, and the gain from reducing $A'$ is strictly better than the gain from increasing $e$. 
Intuitively this is because $A'$ appears in Equation~\ref{eqn_ofevenlessdeath} as a square, which doubles the gain compared to gain from increasing $e$.
\end{remark}

\subsection{Non-polynomial time dual isogeny attack for backdoor curves}\label{subsec:nonpolytrap}

Recall the definition of an $(A,B)$-backdoor curve $(E_0,\theta,d,e)$ 
from Definition~\ref{insecure};
we now extend this to define backdoor curves that give rise to a torsion-point attack of complexity $\Ostar(A^\comp)$.
In this section we explain how to modify Algorithm~\ref{alg:insecure} to compute these more general backdoor curves, and
apply Algorithm~\ref{alg:hybrid} with such a backdoored starting curve $E_0$ by replacing the precomputation step with the modified Algorithm~\ref{alg:insecure}.

\begin{definition}\label{insecure-comp}
Let $A,B$ be coprime positive integers and $0\leq\comp\leq 1/2$.
    An \emph{$(A,B,\comp)$-backdoor curve} is a tuple $(E_0,\theta,d,e)$
    of a supersingular elliptic curve $E_0$ over some $\F_{p^2}$,
    an endomorphism $\theta\in\End(E_0)$ in an efficient representation,
    and two integers~$d,e$,
    such that
    Algorithm~\ref{alg:find_isog}
    solves~\ref{torsion}
    for that particular~$E_0$
    in time~$\Ostar(A^{\comp})$
    when given~$(\theta,d,e)$.
    An $(A,B,0)$-backdoor curve
    is then an $(A,B)$-backdoor curve in the sense of Definition~\ref{insecure}.
\end{definition}

\noindent
To construct $(A,B,\comp)$-backdoor curves, we modify Algorithm~\ref{alg:insecure} as follows:
\begin{itemize}
    \item Use
        $A'=A^{1-\gamma}$ instead of $A$, namely we will guess part of the isogeny with degree $A^\gamma \mid A$.
    \item Instead of starting from $e=1$, choose $A^{\epsilon'}$ random values of $A'^4 B^{-2} < e
         \leq A^{\epsilon}$ 
        (note $e$ is not necessarily an integer square) 
        until there exists $d$ such that $d^2=B^2e\bmod (A')^2$,
        \begin{equation}\label{sign-cond}
            B^2e-d^2 > 0
        \end{equation}
        and $B^2e-d^2$ is a square modulo $p$. 
        Once these values of $d$ and $e$ are found, 
        continue like in 
        Algorithm~\ref{alg:insecure}, Step~\ref{step:solve}.
\end{itemize}

\begin{proposition}\label{extend-improve}
    Heuristically, if $A$ has (at most) $O(\log\log p)$ distinct prime factors:
\begin{itemize}
    \item Let $\comp \in [0,0.4]$.
    For $A$, $B$ such that $B > A^{2 - \frac{5}{2}\cdot\comp}$,
    Algorithm~\ref{alg:insecure} modified as above
    constructs a $(A,B,\comp)$-backdoor curve
    in time $\Ostar(A^\comp)$ on a classical computer,
    assuming an oracle for factoring.
    \item Let $\comp \in [0,0.25]$.
    For every $A$, $B$ such that $B > A^{2-4\cdot\comp}$,
    Algorithm~\ref{alg:insecure}
    constructs a $(A,B,\comp)$-backdoor curve
    in polynomial time on a quantum computer.
\end{itemize}
\end{proposition}

\begin{proof}
    \iffinal
    See the full version~\cite[Appendix~A.4]{fullversion}.
    \else
    See Appendix~\ref{proof-nonpolytrapdoor}.
    \fi
\end{proof}

\begin{corollary}
When $A \approx B$ (e.g. as in SIKE \cite{azarderakhsh2017supersingular}), the modified Algorithm~\ref{alg:insecure}
computes a $(A,B,\frac{2}{5})$-insecure curve in time $\Ostar(A^{\frac{2}{5}})$
on a classical computer
and computes a $(A,B,\frac{1}{4})$-insecure curve in polynomial time on a quantum computer.
In particular, when $A \approx B \approx \sqrt{p}$, 
there exist backdoor curves $E_0$ for which we can solve~\ref{torsion} on a classical computer in time $\Ostar(p^\frac{1}{5})$ and for which we can solve~\ref{torsion} on a quantum computer in time $\Ostar(p^\frac{1}{8})$.
\end{corollary}

\CP{missing a section on Frobenius attack for backdoor curves?}

\section{Impact on unbalanced SIDH, group key agreement, and B-SIDH}\label{sec:impact}

We summarize how the results of Sections \ref{sec:improvements}, \ref{subsec:nonpoly}, and \ref{sec:nonpolyfrob} impact unbalanced SIDH with $p \approx AB$, 
the GSIDH multiparty group key agreement~\cite{DBLP:conf/isita/FurukawaKT18, DBLP:journals/iacr/AzarderakhshJJS19},
and 
B-SIDH~\cite{DBLP:conf/asiacrypt/Costello20}.

\subsection{Frobenius isogeny attack on group key agreement and unbalanced SIDH}\label{ss:frob_nonpoly}

Let us consider unbalanced SIDH with $p \approx AB$.
More precisely, we study instances of~\ref{torsion} with $p = AB\cdot f-1$, 
where $f$ is a small cofactor and where $A$ has (at most) $O(\log\log p)$ distinct prime factors.
Then by Proposition~\ref{prop:eq2} and Theorem~\ref{frob-thm-kate},
under Heuristic~\ref{heur:deathattack-kate2},
the Frobenius isogeny attack of Section~\ref{sec:improvements} 
gives a polynomial-time attack on~\ref{torsion} when $B > \sqrt{p}A^2$.
Since in this section we restrict to the case $p \approx AB$, this inequality simplifies to
$B \geq A^5$. In particular, this gives us one of our main results:

\begin{theorem}\label{thm:polybreakgroupkey}
  Under Heuristic~\ref{heur:deathattack-kate2}, the Frobenius isogeny attack presented in Section~\ref{sec:improvements} breaks the GSIDH $n$-party group key agreement protocol presented
  in~\cite{DBLP:conf/isita/FurukawaKT18,DBLP:journals/iacr/AzarderakhshJJS19} in time polynomial in $\log p$ for all $n\geq 6$.
\end{theorem}

\begin{proof}
Recall from Subsection~\ref{subsec:sidh} that
the cryptanalytic challenge underlying the $n$-party group key  agreement as presented in \cite{DBLP:conf/isita/FurukawaKT18,DBLP:journals/iacr/AzarderakhshJJS19}
can be modelled as an instance of \ref{torsion} with $A = \ell_1^{e_1}$, $B = \ell_2^{e_2}\cdots\ell_n^{e_n}$, and $p = AB\cdot f - 1$, where $\ell_1,\ldots,\ell_n$ are primes such that for all $i,j$ we have $\ell_i^{e_i} \approx \ell_j^{e_j}$ and $f$ is a small cofactor chosen such that $p$ is prime.
Thus the security of the $n$-party group key agreement is similar
to that of unbalanced SIDH with the same $p,A,B$.
Suppose $n\geq 6$. Since $A$ is a prime power (hence has $1=\bigO(\log\log p)$ prime divisors) and $B \geq A^5$, 
the Frobenius isogeny attack on the group key  agreement is polynomial-time
when there are 6 or more parties.
\qed
\end{proof}

We have implemented this attack in Magma~\cite{bosma1997magma} for 6 parties, see the code at~\url{https://github.com/torsion-attacks-SIDH/6party}.
The code is written to attack the power-of-3 torsion subgroup, when $p+1$ is powers of the first 6 primes, and uses cryptographically large parameters.

We know the Frobenius isogeny attack is polynomial on unbalanced SIDH when $B \geq A^5$ 
(and the $n$-party group key agreement when $n\geq 6$);
it remains to investigate the non-polynomial analogue.
To this end, consider the attack presented in Subsection~\ref{sec:nonpolyfrob}.
As above, suppose given an instance of~\ref{torsion} with $p = AB\cdot f-1$, 
where $f$ is a small cofactor, such that $A$ has (at most) $O(\log\log p)$ distinct prime factors,
and now additionally suppose that $B \approx A^{1 + \epsilon}$, where $0 < \epsilon < 4$.
To apply this attack to $n$-party group key agreement with $n=2,3,4,5$, just set~$\epsilon = n-2$.

Proposition~\ref{prop:nonpoly_unbalanced} demonstrates an improvement on the asymptotic complexity for quantum claw-finding as analyzed in~\cite{DBLP:conf/crypto/JaquesS19}
for any level of imbalance (i.e., for any $\epsilon > 0$).
However, 
note that the only quantum subroutine used in our Frobenius isogeny attack is Grover's algorithm, 
so our
complexity computation is independent of the choice of quantum computation model used for claw-finding. As such, using a more nuanced model working with concrete complexities, such as the one presented in \cite{DBLP:journals/iacr/JaquesS20},
will make our quantum attack start to \enquote{improve on the state of the art} at different levels of imbalance.
As our work currently only presents asymptotic complexities, we are leaving an analsyis of this for future work.

\begin{proposition}\label{prop:nonpoly_unbalanced}
Let $A,B$ be coprime smooth numbers where
$B>A^{1+\epsilon}$, and let $p$ be a prime congruent to $3 \pmod 4$. 
Furthermore, suppose that $p=ABf-1$ for some small cofactor $f$, 
and that the number of distinct prime factors of $A$ is (at most) $O(\log\log p)$. 
Let $E_0/\F_p$ be the supersingular elliptic curve with \jinvariant 1728.
Algorithm~\ref{alg:nonpoly_frob_new} solves~\ref{torsion} with these parameters in time
$\Ostar\left(A^{1-\frac{\epsilon }{4}}\right)$ when run on a classical computer and
time $\Ostar\left( A^{\frac{1}{2}-\frac{\epsilon }{8}}\right)$ when run on a quantum computer.
\end{proposition}

\begin{proof}
Let $\alpha = \frac{1}{2+\epsilon}$ and $\beta = \frac{1+\epsilon}{2+\epsilon}$. Proposition~\ref{prop:nonpolyfrob}
proves that Algorithm~\ref{alg:nonpoly_frob_new} runs classically in time
$$
\Ostar\left( A^{ \frac{1+4\alpha -2\beta}{4\alpha}}\right)=\Ostar\left(A^{\frac{(2+\epsilon)+4-2(1+\epsilon)}{4}}\right)
=\Ostar \left(A^{1-\frac{\epsilon }{4}}\right).
$$
Similarly, Proposition~\ref{prop:nonpolyfrob}
proves that Algorithm~\ref{alg:nonpoly_frob_new} runs quantumly in time
$$
\Ostar\left( A^{ \frac{1+4\alpha -2\beta}{8\alpha}}\right)=\Ostar\left(A^{\frac{(2+\epsilon)+4-2(1+\epsilon)}{8}}\right)
=\Ostar \left(A^{\frac{1}{2}-\frac{\epsilon }{8}}\right)
\text.
$$
\par\vspace{-3ex}\qed
\end{proof}

As stated above, akin to the polynomial-time attack, 
substituting $n=\epsilon+2$ in Proposition~\ref{prop:nonpoly_unbalanced} gives us the complexity of the non-polynomial Frobenius isogeny attack on $n$-party group key agreement for $n = 2,3,4,5$ parties, see
Table~\ref{table:group-key-exchange}.

\begin{table}[!ht]
    \caption[]{%
    Asymptotic complexities of our Frobenius isogeny attack on $n$-party key agreement and comparison with the state of the art, i.e., meet-in-the-middle and claw-finding.\protect\footnotemark{}
    Numbers given are the logarithm to base $A$ of the complexity, ignoring factors polynomial in $\log p$.}
    \label{table:group-key-exchange}
\centering
    \begin{tabular}{c||c|c|c|c}
        \toprule
         \# parties ~&~This work (classical) ~&~This work (quantum) ~&~MitM (classical) ~&~\cite{DBLP:conf/crypto/JaquesS19} (quantum) \\
         \midrule
         $\phantom{\geq\,}2$ & $1$ & ${1/2}$ && \\
         $\phantom{\geq\,}3$ & ${3/4}$ & ${3/8}$ & $\large|$ & $\large|$ \\
         $\phantom{\geq\,}4$ & ${1/2}$ & ${1/4}$ & $1/2$ & $1/2$ \\
         $\phantom{\geq\,}5$ & ${1/4}$ & ${1/8}$ & $\large|$ & $\large|$ \\
         ${\geq\,}6$ & $0$ & $0$ & & \\
         \bottomrule
    \end{tabular}
\end{table}
\footnotetext{%
    As justified above, we take~\cite{DBLP:conf/crypto/JaquesS19} for the \enquote{state-of-the-art} numbers for quantum claw-finding here rather than~\cite{DBLP:journals/iacr/JaquesS20}.}

\subsection{Dual isogeny attack applied to B-SIDH}
\label{impact-BSIDH}

A recent proposal called \emph{B-SIDH}~\cite{DBLP:conf/asiacrypt/Costello20} consists of instantiating SIDH with parameters where $AB$ is a divisor of $p^2-1$.
By Proposition~\ref{prop:nonpolydual}, under Heuristic~\ref{heur:deathattack-kate},
when $A \approx B \approx p$ (that is, $\alpha \approx \beta \approx 1$),
Algorithm~\ref{alg:hybrid} yields a quantum attack on these parameters
of complexity $\Ostar(A^{\frac13})=\Ostar(p^{\frac13})$.
This compares to other attack
complexities
in the literature as follows:
\begin{itemize}
    \item Tani's quantum claw-finding algorithm~\cite{tani}
        was claimed to have complexity~$\Ostar(p^{\frac13})$,
        but~\cite{DBLP:conf/crypto/JaquesS19} argues that the complexity
        is actually no lower than~$\Ostar(p^{\frac23})$
        when the cost of data-structure operations
        is properly accounted for.
    \item A quantum algorithm due to
        Biasse, Jao, and Sankar~\cite{BJS}
        finds \emph{some} isogeny between the start and end curve
        in time $\Ostar(p^{\frac14})$.
        While there is a heuristic argument for \enquote{standard} SIDH/SIKE
        that any isogeny suffices to find the correct isogeny~\cite{galbraith2016security},
        this argument relies on the fact that the isogeny sought in SIKE
        has relatively small degree compared to~$p$,
        which is was not believed to be true for B\nobreakdash-SIDH.
        The B\nobreakdash-SIDH paper~\cite{DBLP:conf/asiacrypt/Costello20}
        conservatively views \cite{BJS} as the best quantum attack.
        Since the publication of B-SIDH, it has been shown~\cite{takoisogeny}
        that~\cite{BJS} does in fact apply, so this is currently the best known quantum attack against B-SIDH.
    \item The cost of known classical attacks is no lower than~$\Ostar(A^{\frac12})$,
        which is achieved by meet-in-the-middle techniques (using exponential memory)
        and potentially memoryless by Delfs and Galbraith~\cite{delfs-galbraith}
        when $A\approx p$
        assuming a sufficiently efficient method to produce
        \emph{the} isogeny from some isogeny.
        \LP{does \cite{takoisogeny} not apply here?}
\end{itemize}

\noindent
Thus, assuming Heuristic~\ref{heur:deathattack-kate} holds,
Algorithm~\ref{alg:hybrid}
is
asymptotically
better than quantum claw-finding but is not the best known quantum attack against B\nobreakdash-SIDH
at the moment.

Note that for $1/2<\alpha\approx\beta<1$,
the
(quantum)
attack cost in terms of $p$
may be lower than $\Ostar(p^{\frac13})$,
but it does not get
smaller than $\Ostar(p^{\frac14})$ and hence does not improve on~\cite{BJS}
for $\alpha \approx \beta$.

\subsection{Impact on B-SIDH group key exchange}\label{sec:bsidhgroup}

As an example of how care should be taken when constructing new SIDH-style schemes, we also include a scheme that does not exist in the literature: group key agreement instantiated with B-SIDH parameters.
This is a natural scheme to consider: The size of the base-field prime used in group key agreement grows with the number of parties, and optimally chosen B-SIDH parameters (with respect to efficiency) halves the bit-length of the base-field prime.
Corollary~\ref{cor:bsidhgroupkey} shows that such an instantiation is insecure for 4 or more parties: 

\begin{corollary}\label{cor:bsidhgroupkey}
Let $A,B$ be coprime smooth numbers and let $p$ be a prime congruent to $3\pmod 4$. Furthermore, suppose that $p^2-1=ABf$ for some small cofactor $f$ and that 
$B>A^3$. Let $E_0$ be the supersingular elliptic curve with \jinvariant 1728.
Then, assuming Heuristic~\ref{heur:deathattack-kate},~\ref{torsion} can be solved in polynomial time.
\end{corollary}
\begin{proof}
The result follows from Proposition \ref{prop:eq1}.
\end{proof}
Finally, in Corollary~\ref{cor:bsidhgroupkeynonpoly} 
we give the complexity of our dual isogeny attack on an instantiation of B-SIDH 3-party group key agreement with minimal base-field prime:

\begin{corollary}\label{cor:bsidhgroupkeynonpoly}
Let $A,B$ be coprime smooth numbers and let $p$ be a prime congruent to $3\pmod 4$. Furthermore, suppose that $p^2-1=ABf$ for some small cofactor $f$ and that 
$B>A^2$. Let $E_0$ be the supersingular elliptic curve with \jinvariant 1728.
Then, assuming Heuristic~\ref{heur:deathattack-kate}, Algorithm~\ref{alg:hybrid} solves~\ref{torsion} in time $\Ostar(A^\frac14) = \Ostar(p^\frac16)$ when run on a classical computer and time $\Ostar(A^\frac18) = \Ostar(p^\frac{1}{12})$ when run on a quantum computer.
\end{corollary}

\begin{proof}
This follows from plugging $\alpha = 2/3$ and $\beta = 4/3$ into Proposition~\ref{prop:nonpolydual}.
\end{proof}

\section{Open Question}\label{sec:open}

The two attack variants given in Theorems~\ref{more efficient} and~\ref{frob-thm-kate}
may seem somewhat ad hoc at first.
In this Section, we describe a common abstraction for
both variants and discuss potential generalizations.

The core idea is to relax the choice of $\tau$
as an \textit{endo}morphism of $E$,
instead
allowing $\tau$
to be an isogeny
from $E$ to another curve $E'$:

\begin{theorem}
    \label{thm:general}
    Suppose given an instance of~\ref{torsion}
    where $A$ has $O(\log\log p)$ distinct prime factors.
    Let $\omega\colon E\to E'$
    be a known isogeny
    to some curve $E'$.
    Furthermore, assume we are
    given the restriction to $E_0[B]$
    of an isogeny $\psi\colon E_0\to E'$,
    and an integer $d\in\Z$
    such that
    the isogeny
    $\tau=\psi\dual\varphi+d\omega\in\Hom(E,E')$
    has degree $B^2e$,
    where $e$ is smooth.
    Then,
    we can compute a matching isogeny $\varphi$
    in time $\Ostar(\hspace{-.2em}\sqrt{e})$.
\end{theorem}
\begin{proof}
    The proof is completely analogous to Theorems~\ref{more efficient} and~\ref{frob-thm-kate}.
\end{proof}

\noindent
The specific instantiations obtained
as special cases
earlier
can be
recovered as follows:
\begin{itemize}[topsep=1.4ex,itemsep=1.4ex]
    \item For
        Theorem~\ref{more efficient},
        we simply use $E'=E$,
        the map $\omega$ is the identity morphism on $E$,
        and
        the isogeny $\psi$ is an
        element of the set
        $M'=\varphi M\subseteq \Hom(E_0,E)$,
        where ${M\leq\End(E_0)}$ 
        is the subgroup of
        trace-zero endomorphisms of $E_0$.
    \item For
        Theorem~\ref{frob-thm-kate},
        we use the Galois conjugate $E'=E^\sigma$ of $E$,
        the map
        $\omega\colon E\to E^\sigma$ is
        the
        $p$-power Frobenius isogeny,
        and
        the isogeny $\psi$ is an
        element of the set
        $M'=\varphi^\sigma M\subseteq \Hom(E_0,E^\sigma)$,
        where ${M\leq\End(E_0)}$
        is the subgroup orthogonal to
        Frobenius $\pi\in\End(E_0)$.%
        \footnote{%
        The way Theorem~\ref{frob-thm-kate}
        is presented differs from Theorem~\ref{thm:general} here;
        this is merely a change in notation.}
\end{itemize}

\noindent
In both cases, the choice of $M'$ and $\omega$
is such that the resulting degree form for
the subgroup $M'+\omega\Z$
of $\Hom(E_0,E')$
has a sufficiently nice shape to be solved efficiently
using techniques such as those shown in Subsection~\ref{sec:solving-norm}.

It is unclear whether there are any other choices of $M'$ and $\omega$
which lead to an
efficiently
solvable norm equation and potentially improved attacks.
However,
so far we have not found any other
ways
to exploit this viewpoint
beyond
using
$\varphi$
itself or its Galois conjugate.
Finding other useful generalizations is an interesting open problem.



\bibliographystyle{plain}
\iffinal
\bibliography{springer-bib}
\else
\bibliography{bib}
\fi

\iffinal\else
\appendix
\section{Proofs}\label{sec:proof}

In this section we will cover the proofs not contained in the main body of the paper.

\subsection{Proof of Lemma~\ref{lem:shift_lolli}}\label{proof-shift_lolli}

\begin{proof}[of Lemma~\ref{lem:shift_lolli}]
Subtracting $\bd$ from $\tau$ gives $ \rho= \phi \circ \theta \circ \widehat{\phi}$.
Once we know the lollipop endomorphism $ \rho$, then one can do the following. First one computes the intersection $\ker \rho \cap E_1[A]$ which can be accomplished efficiently as $A$ is smooth. 
If this intersection is cyclic, then $\ker \rho \cap E_1[A]=\ker(\widehat{\phi})$. 

If not, then one can do the following. Let $M$ be the largest integer such that 
$E_1[M]\subset (\ker\rho \cap E_1[A])$. Then one can decompose the secret isogeny $\phi$ as $\widehat{\phi_{A/M}}\circ\phi_M$ where the $\phi_M$ has degree $M$ and $\phi_{A/M}$ has degree $A/M$. 
In \cite[Subsection 4.3]{Petit2017} the following is shown:
\begin{enumerate}
    \item $\ker(\phi_{A/M})=M(\ker(\rho)\cap E_1[A])$ \label{eqn:find_phiAM}
    \item $\theta(\ker(\phi_M))=\ker(\phi_M)$
    \item  The number of subgroups of $E_0[M]$ fixed by $\theta$ is at most $2^{k}$,
    where $k$ is the number of distinct prime factors of $M$.
\end{enumerate}
Fact~\ref{eqn:find_phiAM} shows us how to compute $\phi_{A/M}$. 
Finally, $\phi_M$ can be computed by an exhaustive search 
(going through all the subgroups of $E_0[M]$ which are fixed by $\theta$). This last search is efficient by the condition that $A$ has $O(\log\log p)$ distinct prime factors. 
\qed
\end{proof}
\begin{remark}
In practice $M$ has usually $O(\log\log p)$ prime factors even without the extra condition on the degree of $\theta-[1]$. 
\end{remark}

\subsection{Proof of Proposition~\ref{prop:nonpolydual}}\label{proof-nonpolydual}

\begin{lemma}\label{lem:eqdeathsol}
Under Heuristic~\ref{heur:deathattack-kate}, if
    \begin{equation}\label{death-ineq}
        2\beta+\alpha\epsilon
        \hspace{.4em} \geq \hspace{.4em}
        \max\,\{4\alpha+2\alpha\delta-4\alpha\gamma,\ 2+2\alpha-2\alpha\delta-2\alpha\gamma\}
        \,\text,
    \end{equation}
then Algorithm~\ref{alg:solve_eqn} returns a solution $(a,b,c,d,e)$ to Equation~\eqref{eqn:generalized_eqnofdeath} with $A' = A^{1-\gamma}$ and $e \leq A^{\epsilon}$ smooth in time
$\Ostar(A^\delta)$ when run on a classical computer and in time
$\Ostar(A^{\frac{\delta}{2}})$ when run on a quantum computer.
\end{lemma}

\begin{proof}
To find an appropriate $e$ in Step~\ref{step:start},
find an integer square $e_0$ of size $\approx \log p$, 
of which there are approximately $1/\sqrt{\log p}$,
and take $e$ to be an even power of $e_0$ of size approximately $A^{\epsilon}$, modulo $A'^2$.
Then Step~\ref{step:start} takes time $\Ostar(1)$,
and $d_0$ exists, is $\leq A'^2$, and can be computed in time $\Ostar(1)$ as before.
By Heuristic~\ref{heur:deathattack-kate} Part~\ref{heurdeath-kate-pt1}, 
for every choice of $d'$ as we iterate there exists a $c < p$ in Step~\ref{step:find-c}
with probability 1/2,
hence the smallest $c$ found, the one output by Algorithm~\ref{alg:solve_eqn}, 
will have size $\approx A^{-\delta}p$.
Also, we have $d' \leq A^{\delta}$ so $d^2 \leq A'^4 A^{2\delta}$.
Now \eqref{death-ineq} ensures that
$eB^2 > d^2 + c^2A'^2$,
so the quantity in Heuristic~\ref{heur:deathattack-kate} Part~\ref{heurdeath-kate-pt2} 
(with $A'$ in place of $A$) 
is positive;
hence, since $B$ is at most polynomial in $A$, 
under Heuristic~\ref{heur:deathattack-kate}, Step~\ref{step:cornacchia} succeeds after 
$\Ostar(1)$ iterations.
Hence Algorithm~\ref{alg:solve_eqn} terminates in time $\Ostar(A^{\delta})$ classically
or time $\Ostar(A^{\frac{\delta}{2}})$ quantumly using Grover's algorithm.
\qed
\end{proof}

\begin{proof}[of Proposition~\ref{prop:nonpolydual}]
\newcommand\qfact{\ensuremath{f}}
    Write $\qfact=1$ for classical algorithms
    and $\qfact=\frac12$ for quantum algorithms;
    hence,
    the complexity of Algorithm~\ref{alg:hybrid} as discussed in Subsection~\ref{subsec:nonpoly}
    equals
    $\comp=\max\{\qfact\delta,\qfact\gamma+\frac12\epsilon\}$.
    Call a tuple $(\delta,\gamma,\epsilon)\in[0;1]^3$
    \enquote{admissible} if it
    satisfies the bounds
    \begin{align*}
        \tag{$\ast$}
        \label{eq:admissible_bounds}
        (4+2\delta-4\gamma - \epsilon)\alpha \leq 2\beta
        \quad \text{and} \quad
        (2-2\delta-2\gamma - \epsilon)\alpha \leq 2\beta - 2
    \end{align*}
    from Lemma~\ref{lem:eqdeathsol}.
    Suppose given an admissible tuple $(\delta,\gamma,\epsilon)$
    with cost $\comp\leq1/2$.
    First, notice
    that setting $\gamma':=\max\{\delta,\gamma+\frac1{2f}\epsilon\}$,
    the tuple $(\delta,\gamma',0)$
    is still admissible
    with the same $\comp$.
    (Since $\comp\leq1/2$, we have $\gamma'\leq\frac1{2f}\leq1$.)
    Thus, it suffices to
    consider
    admissible
    tuples $(\delta,\gamma',0)$
    with $0\leq\delta\leq\gamma'\leq1$
    when optimizing.
    The bounds~\eqref{eq:admissible_bounds}
    simplify to
    \begin{align*}
        \tag{$\ast'$}
        \label{eq:admissible_bounds2}
        1+\alpha-\beta-\alpha\gamma'
        \,\,\leq\,\,
        \alpha\delta
        \,\,\leq\,\,
        \beta-2\alpha+2\alpha\gamma'
        \,\text,
    \end{align*}
    which
    (leaving out the middle term $\alpha\delta$ and simplifying)
    implies
    \begin{equation}
        \label{eq:bound1}
        \tag{$\ast_1$}
        \gamma' \,\,\geq\,\, \frac{1+3\alpha-2\beta}{3\alpha}
        \text.
    \end{equation}
    This establishes a lower bound on $\gamma'$,
    but it is not yet clear which of these values
    are actually possible:
    For a given $\gamma'$,
    we additionally require a
    $\delta\in[0;\gamma']$ that satisfies
    the bounds~\eqref{eq:admissible_bounds2}.
    Hence,
    the upper bound
    $\beta-2\alpha+2\alpha\gamma'$
    on $\alpha\delta$
    in~\eqref{eq:admissible_bounds2}
    must be non-negative,
    which simplifies to
    \begin{equation}
        \label{eq:bound2}
        \tag{$\ast_2$}
        \gamma' \,\,\geq\,\, \frac{2\alpha-\beta}{2\alpha}
        \,\text.
    \end{equation}
    Similarly,
    the lower bound
    $1+\alpha-\beta-\alpha\gamma'$
    on
    $\alpha\delta$
    in~\eqref{eq:admissible_bounds2}
    must not be greater than~$\alpha\gamma'$, yielding
    \begin{equation}
        \label{eq:bound3}
        \tag{$\ast_3$}
        \gamma' \,\,\geq\,\, \frac{1+\alpha-\beta}{2\alpha}
        \,\text.
    \end{equation}
    Recalling that $\comp=f\gamma'$,
    this shows the claim.
\qed
\end{proof}

\subsection{Proof of Proposition~\ref{prop:nonpolyfrob}}\label{proof-nonpolyfrob}

\begin{proof}[of Proposition~\ref{prop:nonpolyfrob}]
In the precomputation step of Algorithm \ref{alg:frob-eqn}, 
we use $A' = A ^{1-\gamma} = p^{\alpha - \alpha \gamma}$ as an input in place of $A$.
By Proposition~\ref{prop:eq2}, under Heuristic~\ref{heur:deathattack-kate2},
if $B > \sqrt{p}A'^2$ but is at most polynomial in $A$ and $A'$ has $O(\log\log p)$ distinct prime factors,
then
this step returns $(a,b,c,e)$ with $e = O(\log p)$ in polynomial time.
Note that if $\gamma > (1+4\alpha-2\beta)/4\alpha$ then
\[B = p^{\beta} > p^{\frac{1+4\alpha-4\alpha\gamma}{2}} = \sqrt{p}A'^2,\]
that is, for $\gamma > (1+4\alpha-2\beta)/4\alpha$, the precomputation step runs in polynomial time.
The online step runs in time $\Ostar(A^{\gamma})$ on a classical computer and 
time $\Ostar(A^{\frac{\gamma}{2}})$ on a quantum computer using Grover's algorithm,
so the result now follows by choosing $\gamma$ optimally for any given $\alpha$ and $\beta$. \CP{this proves the proposition, but not optimality. Is $\epsilon=0$ optimal?}
\qed
\end{proof}

\subsection{Proof of Proposition~\ref{extend-improve}}\label{proof-nonpolytrapdoor}

\begin{proof}[of Proposition~\ref{extend-improve}]
With the modifications described in Subsection~\ref{subsec:nonpolytrap},
the attacker invokes
Algorithm~\ref{alg:find_isog} to compute the secret isogeny, 
using the data~$(\theta,d,e)$ from Algorithm~\ref{alg:insecure}.

We analyze the complexity of running the modified Algorithm~\ref{alg:insecure} followed by~Algorithm \ref{alg:find_isog}.
The two quadratic residuosity conditions are heuristically satisfied one in four times,
so we ignore them in this analysis.
The cost of Algorithm~\ref{alg:insecure} modified in this way becomes 
$\Ostar(A^{\epsilon'})$ for a classical adversary and $\Ostar(A^{\frac{\epsilon'}{2}})$
for a quantum adversary.

Note also that by construction we have $e \leq A^{\epsilon}$, 
so the cost of running Algorithm~\ref{alg:find_isog} 
will be $\Ostar(A^{\gamma + \frac{\epsilon}{2}})$ for a classical adversary and
$\Ostar(A^{\frac{\gamma + \epsilon}{2}})$ for a quantum adversary,
following the same reasoning as in the complexity analysis of Algorithm~\ref{alg:hybrid}.

We now look at the conditions for existence of a solution in Algorithm~\ref{alg:insecure}.
Note that $d$ is a priori bounded by $(A')^2=A^{2(1-\gamma)}$. 
However, after trying $A^{\epsilon}$ values for $e$ we may hope to find some $d$ bounded by $A^{2(1-\gamma)-\epsilon}$. 
To satisfy \eqref{sign-cond} we need 
$$2\beta>\alpha(4-4\gamma-2\epsilon'-\epsilon),$$
and by construction
we also need $\epsilon'\leq\epsilon$.

For a classical adversary, setting $\epsilon=\epsilon' = 2\gamma = \comp$ gives the result.
For a quantum adversary, setting $\epsilon=\epsilon' = 0$ and $\gamma = 2\cdot \comp$ gives the result.
\qed
\end{proof}

\begin{remark}
We found these choices for $\epsilon,\epsilon',\gamma$ by solving the following optimization problems for $\alpha = \beta = \frac{1}{2}$, so at least in that case (which corresponds to SIKE) we expect there to be no better choice with respect to overall complexity:
For the best classical attack when $\alpha=\beta=\frac{1}{2}$ we solved the following linear optimization problem:%
$$\min_{\substack{
4\gamma+2\epsilon'+\epsilon \geq 2, \\
\epsilon \geq \epsilon'
}} 
\max\left\{\epsilon',\gamma+\frac{\epsilon}{2}\right\}.$$

\noindent
For the best quantum attack when $\alpha=\beta=\frac{1}{2}$ we solved the following linear optimization problem:
$$\min_{\substack{
4\gamma+2\epsilon'+\epsilon \geq 2 \\
\epsilon \geq \epsilon'
}}
\max\left\{\frac{\epsilon'}{2},\frac{\gamma+\epsilon}{2}\right\}.$$
\end{remark}

\section{Additional examples of backdoored primes} \label{Appendix}

In the examples in Subsection~\ref{ss:Insecure p}, we let $\None=2^{216}$, $\Ntwo=3^{300}, e=1$. We let $d$ equal $\Ntwo\bmod\None^2$, and $D=\frac{\Ntwo-d^2}{\None^2}$, hence
\begin{align*}
D = \frst 16896420333246701930066245846797285820453043046692612
    \cont 34160275705261296847619733634147787139416180071370253
    \cont 151875694583397987452872630971686172791991823800180
    \text.
\end{align*}
We first choose $c=53$, then $D-c^2$ is a prime number (i.e., $a=1,~b=0$),
\begin{align*}
p = \frst 16896420333246701930066245846797285820453043046692612
    \cont 34160275705261296847619733634147787139416180071370253
    \cont 151875694583397987452872630971686172791991823797371
    \text.
\end{align*}
When $c=355$, then $D-c^2$ is 5 times a prime number, namely,
\begin{align*}
p = \frst 33792840666493403860132491693594571640906086093385224
    \cont 68320551410522593695239467268295574278832360142740506
    \cont 30375138916679597490574526194337234558398364734831
    \text.
\end{align*}
Both of these primes are congruent to $3$ modulo $4$.

\medskip

We also give additional examples of Pythagorean triples as described in Subsection \ref{subsec:param}.
In particular, let
\begin{align*}
\Ntwo &= 17^{60}, \\
\None &= 2^5 \cdot 3^2 \cdot 5^2 \cdot 7 \cdot 11 \cdot 13 \cdot 19 \cdot 23 \cdot 41 \cdot 47 \cdot 59 \cdot 61 \cdot 101 \cdot 181 \cdot 191 \cdot 199 \cdot 239 \cdot 421 \\
&\hspace{2em}  \cdot 541 \cdot 659 \cdot 769 \cdot 2281 \cdot 16319 \cdot 30119 \cdot 285599 \cdot 391679 \cdot 1039081 \cdot 1109159 
\end{align*}
For this, $177\None\Ntwo-1 \equiv 3 \pmod 4$ is prime.
\noindent
Finally, a powersmooth example is given by
\begin{align*}
\Ntwo &= 5^8 \cdot 13^4 \cdot 17^4 \cdot 29^4 \cdot 37^4 \cdot 41^4 \cdot 53^4 \cdot 61^4 \cdot 73^4 \cdot 89^4 \cdot 97^4, \\
\None &= 2^4 \cdot 3 \cdot 7 \cdot 11 \cdot 23 \cdot 31 \cdot 127 \cdot 199 \cdot 811 \cdot 2903 \cdot 155383 \cdot 842041\cdot 933199 \cdot 1900147 \\
&\hspace{2em} \cdot 8333489 \cdot 21629743 \cdot 30583723 \cdot 69375497
\end{align*}
For this, $19\None\Ntwo-1 \equiv 3 \pmod 4$ is prime.

\section{Computing concrete backdoor instances}\label{imp}

In this section we report on computations regarding Algorithm \ref{alg:insecure} for some concrete parameters. We chose parameters $\None=2^{216},~\Ntwo=3^{300},~p=\None\Ntwo\cdot 277-1$. It is easy to see that we can choose $e=1$ and $d$ equal to $\Ntwo$ modulo $\None^2$. Now we need to factor $\frac{\Ntwo^2-d^2}{\None^2}$. The way we chose $d$ makes it easy as $\frac{\Ntwo^2-d^2}{\None^2}=\frac{\Ntwo-d}{\None^2}(\Ntwo+d)$. This is something which applies in other cases as well, and to make sure that factorization is easy one can try choices of $d$ until factoring $\Ntwo+d$ is feasible (e.g., $\Ntwo+d$ is a prime number). For completeness, the factorization of $\frac{\Ntwo^2-d^2}{\None^2}$ is
\begin{align*}
    &\,\,
    2^2\cdot 5\cdot 23 \cdot 359\cdot 2089 \cdot 39733 \cdot 44059 \cdot 74353 \cdot \mbox{}
    \\
    \frst 37628724343042581190433455539389264355404578964704347
    \cont 59039416676945740598806299461624575502089058332472952
    \cont 9427908921244148421914499463
    \text.
\end{align*}

\vspace{1 mm}

Once the factorization is known, we apply Simon's algorithm, implemented in Pari/GP~\cite{batut2000user}
as \texttt{qfsolve()}, to compute a rational solution to the equation $pa^2+pb^2+c^2=\frac{\Ntwo^2-d^2}{\None^2}$. A rational solution is given by
\begin{align*}
    a =
    \frst 32319123496536786843254458765608553095663568521872334
    \cont 297530315749275438736572
            / z
    \\
    b =
    \frst 37902893736016880777193854875253045553175457573067191
    \cont 2406340378400674751175560
            / z
    \\
    c =
    \frst 85437128777417136022423941321585505761757160615798739
    \cont 72406075696054195168847143870020389324092617191284723
    \cont 80905798835064955553407208320599901478282089806543945
    \cont 266931422175906643935346
            / z
    \text,
    \\
    \text{where}
    \\
    z =
    \frst 87978348577011335417453239649099382225650021375809220
    \cont 4820354441211407993264179570949123846469170675585119
    \text.
\end{align*}

Once $\theta$ is computed one has to compute an order $\OO_0$ which contains $\theta$. This can be accomplished in various ways. 
One way is to find a $\theta'$ such that $\theta\theta'+\theta'\theta=0$ and $\theta'^2$ is an integer multiple of the identity. 
This amounts to finding the kernel of the linear map $\eta\mapsto \theta\eta+\eta\theta$, 
which is a 2-dimensional vector space over $\mathbb{Q}$ (i.e., one chooses an element in this kernel and then multiplies it with a suitable integer).
It is preferable to construct $\OO_0$ in this way so that 
the discriminant of the order is the square of the reduced norm of $\theta\theta'$.
In particular, if we choose a $\theta'$ whose norm is easy to factor, 
then the discriminant is also easy to factor. 
One has a lot of flexibility in choosing $\theta'$ and lattice reduction techniques help finding one which is sufficiently small and has an easy factorization. 
Note that the norm of $\theta'$ will always be divisible by $p$ since the discriminant of every order is a multiple of $p$ (and the norm of $\theta$ is coprime to $p$). 
Finally, one can compute a maximal order containing $\OO_0$
using MAGMA's~\cite{bosma1997magma} \texttt{MaximalOrder()} function.

\fi

\end{document}